\newtheorem{theorem}{Theorem}[section]
\newtheorem{lemma}[theorem]{Lemma}
\newtheorem{prop}[theorem]{Proposition}
\newtheorem*{statement*}{Statement}
\newtheorem*{theorem*}{Theorem}
\newtheorem*{lemma*}{Lemma}
\newtheorem*{fact*}{Fact}
\theoremstyle{definition}
\newtheorem*{definition*}{Definition}
\newtheorem*{example*}{Example}
\newtheorem*{exercise*}{Exercise}
\newtheorem*{proposition*}{Proposition}
\newtheorem{corollary}[theorem]{Corollary}
\newtheorem*{corollary*}{Corollary}
\newtheorem*{claim*}{Claim}
\newtheorem*{test*}{Test}
\theoremstyle{remark}
\newtheorem*{remark*}{Remark}
\newcommand{\R}{\mathbb{R}}
\newcommand{\Z}{\mathbb{Z}}
\newcommand{\C}{\mathbb{C}}
\newcommand{\Hyp}{\mathbb{H}}
\newcommand{\e}{\varepsilon}
\newcommand{\supp}{\text{supp}}
\newcommand{\PSL}{\text{PSL}_2(\mathbb{R})}
\newcommand{\l@abcd}[2]{\hbox to\textwidth{#1\dotfill #2}}
\title{On the support of a non-autocorrelated function on a hyperbolic surface}
\author[K. Golubev]{Konstantin Golubev}
\email{golubevk@ethz.ch}
\address{D-MATH, ETH Zurich, Switzerland}
\date{\today}
\keywords{chromatic number, independence ratio, hyperbolic plane, hyperbolic surface}
\subjclass{Primary 05C15, 30F45; Secondary 05C63, 30F10, 30F15}
\begin{document}

\maketitle

\begin{abstract}Let $f$ be a non-negative square-integrable function on a finite volume hyperbolic surface $\Gamma\backslash\Hyp$, and assume that $f$ is non-autocorrelated, that is, perpendicular to its image under the operator of averaging over the circle of a fixed radius $r$. We show that in this case the support of $f$ is small, namely, it satisfies $\mu(\supp{f}) \leq (r+1)e^{-\frac{r}{2}} \mu(\Gamma\backslash\Hyp)$.

As a corollary, we prove a lower bound for the measurable chromatic number of the graph, whose vertices are the points of $\Gamma\backslash\Hyp$, and two points are connected by an edge if there is a geodesic of length $r$ between them. We show that for any finite covolume $\Gamma$ the measurable chromatic number is at least $e^{\frac{r}{2}}(r+1)^{-1}$. 
\end{abstract}

\section{Introduction}
Let $\Hyp$ denote the hyperbolic plane, let $\Gamma\subset \PSL$ be a finite covolume Fuchsian group, and let $r > 0$ be a fixed positive number. Let $A_r:L^2(\Gamma\backslash\Hyp)\to L^2(\Gamma\backslash\Hyp)$ denote the operator of averaging over the circle of radius $r$, in other words, convolution with the uniform measure on the circle of raduis $r$
\[
K \left(\begin{array}{cc}
e^{\frac{r}{2}} & 0\\
0 & e^{-\frac{r}{2}}
\end{array}\right) K,
\]
where $K = PSO(2)\subset \PSL$ is the maximal compact subgroup of $\PSL$. The main result of this paper is the following theorem.
\begin{theorem}\label{thm:main}
Let $f\in L^2(\Gamma\backslash \Hyp)$ such that $f\geq 0$ and $\langle A_r f, f\rangle = 0$. Then 
 $$
\frac{\mu(\supp{f})}{\mu(\Gamma\backslash \Hyp)} \leq \frac{r+1}{e^{\frac{r}{2}}}.
 $$
\end{theorem}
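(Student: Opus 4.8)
The plan is to run a spectral, Hoffman-type ratio argument, the main work being a lower bound on the spectrum of $A_r$ on the orthogonal complement of the constants.

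First I would reduce to an indicator function. Write $S=\supp f$. The kernel of $A_r$ is the non-negative uniform measure on the circle of radius $r$, so $\langle A_r f,f\rangle = 0$ together with $f\ge 0$ forces $f(x)f(y)=0$ for almost every pair $(x,y)$ at distance $r$; equivalently, for almost every $x\in S$ the circle of radius $r$ about $x$ meets $S$ in a null set. Since this condition depends only on $S$ and not on the values of $f$, it gives $\langle A_r \mathbf{1}_S,\mathbf{1}_S\rangle = 0$ as well, while $\mu(S)=\langle \mathbf{1}_S,\mathbf{1}_S\rangle$. Thus I may replace $f$ by $\mathbf{1}_S$.

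Next, the spectral decomposition. The operator $A_r$ is self-adjoint, fixes the constants ($A_r\mathbf{1}=\mathbf{1}$), and preserves $L^2_0=\{\mathbf{1}\}^\perp$; on a Laplace-spectral component with eigenvalue $\la=\tfrac14+t^2$ it acts as multiplication by the spherical value $\varphi_{1/2+it}(r)$ (Harish--Chandra / Selberg theory), and this covers the continuous spectrum as well. Let $m=\inf\{\varphi_\la(r):\la\in\mathrm{spec}(\Delta|_{L^2_0})\}$. Decomposing $\mathbf{1}_S=\beta\mathbf{1}+g^\perp$ with $\beta=\mu(S)/\mu(\Gamma\backslash\Hyp)$ and $\|g^\perp\|^2=\mu(S)(1-\beta)$, expanding $0=\langle A_r\mathbf{1}_S,\mathbf{1}_S\rangle=\beta^2\mu(\Gamma\backslash\Hyp)+\langle A_r g^\perp,g^\perp\rangle$ and using $\langle A_r g^\perp,g^\perp\rangle\ge m\|g^\perp\|^2$ yields, after dividing by $\mu(S)>0$, the inequality $-\beta\ge m(1-\beta)$. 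When $m\ge 0$ this forces $\mu(S)=0$ and the bound is trivial; when $m<0$ it rearranges to the standard ratio bound
$$\beta\ \le\ \frac{|m|}{1+|m|}\ \le\ |m|.$$

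The main obstacle is therefore to show $m\ge -(r+1)e^{-r/2}$, i.e. to bound the spherical functions. The exceptional (complementary-series) parameters $\la\in(0,\tfrac14)$ give $\varphi_s(r)>0$ — the integrand $(\cosh r+\sinh r\cos\phi)^{-s}$ in the Harish--Chandra representation is positive there — so they do not affect the infimum. For the tempered parameters $\la=\tfrac14+t^2$, $t\in\R$, I would use $|\varphi_{1/2+it}(r)|\le\varphi_{1/2}(r)$ (from $|(\,\cdot\,)^{-1/2-it}|=(\,\cdot\,)^{-1/2}$, or from the Mehler--Dirichlet formula with $|\cos(t\rho)|\le 1$), reducing everything to the single one-variable estimate
$$\varphi_{1/2}(r)\ =\ \frac{\sqrt2}{\pi}\int_0^r \frac{d\rho}{\sqrt{\cosh r-\cosh\rho}}\ \le\ (r+1)e^{-r/2}.$$
Here the factor $e^{-r/2}$ comes from $\cosh r-\cosh\rho\asymp\tfrac12 e^{r}(1-e^{\rho-r})$ and the linear factor $r+1$ from the logarithmic growth of $\int_0^r(1-e^{-x})^{-1/2}\,dx$; the bound is tight at $r=0$, where both sides equal $1$ (indeed $A_0=\mathrm{id}$ and $\varphi_\la(0)=1$), which is why I expect exactly this clean constant. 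I anticipate that making this inequality hold cleanly for \emph{all} $r>0$, rather than only asymptotically, will be the delicate part. Combining with the ratio bound gives $\mu(\supp f)/\mu(\Gamma\backslash\Hyp)=\beta\le|m|\le(r+1)e^{-r/2}$, as claimed.
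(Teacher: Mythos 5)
Your proposal is correct, and its spectral core coincides with the paper's: both establish $M(A_r)=1$ via the constant eigenfunction, dispose of the complementary series by positivity of the integrand in the integral representation of $P_{-\frac{1}{2}+is}(\cosh r)$, bound the principal-series values by $(r+1)e^{-r/2}$, and conclude with the ratio $\frac{|m|}{1+|m|}\le |m|$; your inlined decomposition $1_S=\beta\cdot 1+g^{\perp}$ is exactly the paper's invocation of Theorem~\ref{thm:hoffman} with $R=1$ and $\e=0$. Where you genuinely differ is the reduction from $f$ to its support. The paper proves (as a separate statement, Theorem~\ref{thm:ind-set}, of independent interest) a bound for $\Gamma$-invariant distance-$r$ avoiding \emph{sets}, and then shows via a Lebesgue density point argument that, up to measure zero, $\supp f$ contains no two points joined by a geodesic of length $r$: two density points at distance $r$ would force $\langle A_rf,f\rangle>0$ because a circle of radius $r$ centered near one density ball crosses the other in an arc of positive length. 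Your Fubini-type argument --- $f\cdot A_rf=0$ a.e., hence for a.e.\ $x\in S$ the circle about $x$ meets $\{f>0\}$ in a null set for the circle measure, hence $\langle A_r 1_S,1_S\rangle=0$ --- is shorter and stays entirely inside the operator framework, trading the geometric intermediate statement for directness; to apply the Hoffman machinery verbatim you should also note that $\langle A_r 1_S,1_S\rangle=0$ with a non-negative kernel forces $\langle A_r g,g\rangle=0$ for \emph{every} $g$ supported in $S$, which follows by truncation. Two caveats: your reduction (like the paper's) tacitly takes $\supp f$ to mean $\{f>0\}$ up to null sets; and the estimate $P_{-\frac{1}{2}}(\cosh r)\le(r+1)e^{-r/2}$, which you rightly flag as the delicate uniform-in-$r$ point, is in the paper simply a citation of Proposition~2.3 of the Golubev--Kamber paper rather than proved --- your sketch identifies the right mechanism, except that $\int_0^r(1-e^{-x})^{-1/2}\,dx$ grows \emph{linearly} (the integrand tends to $1$), not logarithmically, and it is precisely this linear growth that produces the factor $r+1$.
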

This result can be interpreted as a quantative version of the result of Nevo,~\cite{nevo1994pointwise}, on the equidistribution of the circle on a hyperbolic surface. 
\begin{theorem}[Nevo, \cite{nevo1994pointwise}]\label{thm:nevo}
Let $f\in L^2(\Gamma\backslash \Hyp)$, then 
 \[
 A_r f \to \frac{1}{\mu(\Gamma\backslash \Hyp)}\int_{\Gamma\backslash \Hyp} f d\mu, \text{ as } r\to\infty,
 \]
 almost everywhere and in the $L^2$-norm.
\end{theorem}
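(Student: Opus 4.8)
The plan is to move to the spectral side of the Laplacian, deduce $L^2$ convergence by dominated convergence against the spectral measure, and then upgrade to almost-everywhere convergence via a maximal inequality combined with convergence on a dense subclass.

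First I would record the structure of $A_r$. It is convolution on $\Gamma\backslash\Hyp$ with the $K$-bi-invariant probability measure carried by the double coset $K a_r K$, $a_r = \mathrm{diag}(e^{r/2},e^{-r/2})$; this measure is symmetric, so $A_r$ is a self-adjoint contraction, $\|A_r\|_{L^2\to L^2}\le 1$, it commutes with the Laplace operator $\Delta$, and $A_r\one = \one$. Since it commutes with $\Delta$, it is diagonalised by the spectral decomposition of $\Delta$ on $L^2(\Gamma\backslash\Hyp)$: on the $\lambda$-eigencomponent it acts as the scalar $\varphi_\lambda(r)$, the value at radius $r$ of the normalised radial eigenfunction (spherical function), this being the mean-value identity $A_r\psi = \varphi_\lambda(r)\psi$ for every $\Delta\psi = \lambda\psi$. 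The constant $\one$ sits at $\lambda = 0$, where $\varphi_0\equiv 1$; this is the only point of the spectrum at which the multiplier does not decay.

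For the $L^2$ assertion, write $f = \langle f\rangle + f_0$ with $\langle f\rangle = \mu(\Gamma\backslash\Hyp)^{-1}\int f\,d\mu$ the orthogonal projection onto constants and $f_0 \perp \one$. As $A_r$ fixes constants, $A_r f - \langle f\rangle = A_r f_0$, and the spectral theorem gives
\[
\|A_r f_0\|_2^2 = \int |\varphi_\lambda(r)|^2\, d\nu(\lambda),
\]
where $\nu$ is the spectral measure of $f_0$, finite of total mass $\|f_0\|_2^2$ and not charging $\lambda = 0$. The classical asymptotics of the spherical function---$|\varphi_\lambda(r)|\le C(1+r)e^{-r/2}$ on the tempered part $\lambda\ge\tfrac14$, and $|\varphi_\lambda(r)|\le C_\lambda e^{-\delta_\lambda r}$ with $\delta_\lambda>0$ on the finitely many exceptional eigenvalues $\lambda\in(0,\tfrac14)$---show $\varphi_\lambda(r)\to 0$ for every $\lambda\neq 0$, while the contraction bound gives $|\varphi_\lambda(r)|\le 1$ uniformly. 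Dominated convergence then forces $\|A_r f_0\|_2\to 0$, which is the $L^2$ statement.

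The almost-everywhere statement is the heart of the matter. For $g$ in the Sobolev space $H^1(\Gamma\backslash\Hyp)$ it already follows from the exponential decay above: along integers, $\sum_n\|A_n g_0\|_2^2 \le \int\sum_n|\varphi_\lambda(n)|^2\,d\nu_{g_0}<\infty$ gives $A_n g_0\to 0$ a.e., and the gaps are filled by
\[
\sup_{n\le r\le n+1}|A_r g_0 - A_n g_0|^2 \le \int_n^{n+1}|\partial_\rho A_\rho g_0|^2\,d\rho,
\]
whose integral over the surface is $\lesssim (1+n)^2 e^{-n}\|g_0\|_{H^1}^2$ and hence summable in $n$, so Borel--Cantelli yields $A_r g_0\to 0$ a.e. Since $H^1$ is dense in $L^2$, the Banach principle then produces a.e. convergence for all $f\in L^2$ provided one has the maximal inequality $\|Mf\|_2\le C\|f\|_2$ for the spherical maximal operator $Mf = \sup_{r\ge 1}|A_r f|$.

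This maximal inequality is the main obstacle, the hyperbolic analogue of Stein's spherical maximal theorem: the factor $(1+|t|)$ lost when differentiating $\varphi_\lambda$, $\lambda = \tfrac14 + t^2$, shows that a bare square function costs a derivative, so descending from $H^1$ to $L^2$ genuinely needs the dispersive gain of the sphere average. I would establish it either by Stein's analytic interpolation applied to a damped family $A_r^z$---trivial $L^2$ bounds for $\Re z$ large, an $L^1$-type endpoint, interpolated to $z=0$---or through the Kunze--Stein estimates that control spherical convolution on $\PSL$. Finally, to transport the inequality from the symmetric space $\Hyp$ (equivalently from $\PSL$) down to the finite-volume, possibly cusped quotient, I would invoke the Calderón transfer principle, so that non-compactness of $\Gamma\backslash\Hyp$ causes no difficulty.
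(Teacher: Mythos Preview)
The paper does not prove this statement. Theorem~\ref{thm:nevo} is quoted from Nevo's 1994 paper and is used only as a point of comparison in Section~\ref{sec:nevo}; no argument for it is given here. There is therefore no ``paper's own proof'' against which to compare your proposal.

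As for the proposal itself, the outline is the standard one and is essentially Nevo's original strategy: the $L^2$ convergence via the spectral calculus and decay of the spherical function $\varphi_\lambda(r)$ for $\lambda\neq 0$ is correct and routine, and the almost-everywhere part is, as you identify, governed by a maximal inequality for the spherical maximal operator $Mf=\sup_{r\ge 1}|A_rf|$, combined with convergence on a dense class and a transfer principle. Two remarks. First, the maximal inequality is genuinely the whole difficulty, and your sketch of its proof (analytic interpolation on a damped family, or Kunze--Stein) is more of a pointer than an argument; in Nevo's work the input is the Kunze--Stein phenomenon for $\PSL$ together with the Harish-Chandra bound on $\varphi_\lambda$, and some care is needed to get the full $L^2$ maximal bound rather than only $L^p$ for $p>2$. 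Second, your $H^1$ step, while plausible, is not actually needed once the maximal inequality is in hand: density of, say, $C^\infty_c$ already suffices. None of this is a gap in the sense of a wrong idea, but you should be aware that the maximal inequality is a substantial theorem in its own right and cannot be dispatched in a line.
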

Results similar to Theorem~\ref{thm:main} can be deduced from the theorem of Nevo. However, in order to do so, one needs an estimate on the rate of the connvergence, which is governed by the spectral gap of the Laplacian on $\Gamma\backslash \Hyp$. The bound in Theorem~\ref{thm:main} does not exploit any information on the spectral gap, and more importantly, to the best of our knowledge outperforms Theorem~\ref{thm:nevo} in the cases when the spectral gap is not optimal, i.e., there are non-trivial eigenvalues of the Laplacian arising from the complementary series. See Section~\ref{sec:nevo} for more details    

Theorem~\ref{thm:main} implies an interesting corollary for the problem of coloring of hyperbolic surfaces, a generalization of the Hadwiger-Nelson problem. We elaborate on this in the rest of the introduction. The Hadwiger-Nelson problem asks for \textit{the chromatic number of the plane}, that is the minimal number of colors needed in order to color points of the Euclidean plane in such a way that no two points at distance $1$ exactly receive the same color. The problem was originally posed in the  1950's (see~\cite{Soifer2016} for the history of the problem), and in 1961, in the problems section of a journal,~\cite{mosers1961}, the Moser brothers proved a lower bound of 4. They presented a unit-distance graph of chromatic number 4, which was later named Mosers' Spindle. An upper bound of 7 can be proved by considering a 7-coloring of the regular hexagon tessellation of the plane with hexagons of diameter slightly less than $1$. Remarkably, no progress on the original question had been made until 2018, when de Grey,~\cite{Grey2018}, constructed a unit-distance graph of chromatic number 5. His construction was then generalized, see, for example,~\cite{Exoo2019,heule2018computing}, and notably gave rise to a Polymath Project,~\cite{Polymath16}.

One can ask the same question conditioned the color classes to be measurable. The number is then called the \textit{measurable} chromatic number of the plane. In this case the lower bound of $5$ was already proved by Falconer in \cite{falconer1981}. His approach was to give a lower bound on the maximum density of a distance $1$ avoiding set of the plane, that is, a set that contains no two points at distance 1 from each other. The question of the density of distance $1$ avoiding sets in $\R^n$ became interesting on its own, see~\cite{bachoc2015} and references therein. In~\cite{bachoc14}, a spectral approach for providing a lower bound for the Euclidean space of dimension $n$ was presented, as well as it is adaptation to a general setting, of which we make use in this paper. 

An analogous question on a sphere is known as the Witsenhausen problem,~\cite{witsenhausen74}: What is the largest possible surface measure of a subset of $S^{n-1}\subset \R^n$ not containing any pair of points lying at the angle $\pi/2$? A general spectral approach for any $n$ and any forbidden angle $\theta$ (not just $\pi/2$) was developed in~\cite{decorte2015}. Later, the exact convex formulation was given in~\cite{decorte2018complete} extending the spectral approach and providing better bounds.

Note that unlike the Euclidean case, where varying the distance does not change the answer, the chromatic number of the sphere a priori depends on the forbidden angle.

The hyperbolic plane came under consideration relatively recently, when M. Kahle formulated the following question on Math Overflow,~\cite{kahle12}. It also appears as Problem P in~\cite{kloeckner2015coloring}:
\begin{quote}
	Let $\Hyp$ be the hyperbolic plane (with constant curvature $-1$), and
	let $r>0$ be given. If $\Hyp(r)$ is the graph on vertex set $\Hyp$, in which two
	points are joined with an edge when they have distance $r$, then what is
	the chromatic number $\chi(\Hyp(r))$?
\end{quote}

As in the spherical case, the number a priori depends on the forbidden distance. The best known lower bound is $4$, and is given again by the Mosers' spindle, while the best known lower bound for the \emph{measurable} chromatic number is $6$ provided $r$ is large enough (numerical experiments show that $r=12$ is enough) is proved by the spectral method in \cite{DeCorte2019}.

The best known upper bound was proved by Kloeckner in \cite{kloeckner2015coloring} and then improved by Parlier and Petit in \cite{parlier2017chromatic}, and is known to be linear in $r$ for large enough $r$. The bound is 
$$
 \chi (\Hyp(r))\leq 5\left( \left\lceil \frac{r}{\ln{4}}\right\rceil + 1\right), \text{ for } r>5.
$$

As in other settings, a lower bound on the measurable chromatic number $\chi_m(\Hyp(r))$ can be proved via an upper bound on the density of a distance $r$ avoiding set, i.e., a set that have no two points at distance of $r$ from each other. See~\cite{Bowen2002} for a definition of the density of a set on the hyperbolic plane, which is given there in the context of sphere packings, as well as a discussion on its differences from the Euclidean case. 

In this paper, we focus on the \textit{periodic} distance $r$ avoiding sets on the hyperbolic plane. A set on the hyperbolic plane is called periodic, if there exists a Fuchsian group $\Gamma\subset \PSL$ such that the set is invariant under the action of $\Gamma$. A Fuchsian group is a discrete subgroup of $\PSL$. A Fuchsian group $\Gamma$ is called finite covolume, if it has a fundamental region $F\subset\Hyp$ of finite measure (see Subsection~\ref{subsec:Fuchsian} for the definition). In this case, we denote $\mu(\Gamma\backslash\Hyp) = \mu(F)$, and for a $\Gamma$-invariant subset $A\subset\Hyp$, we denote $\mu(\Gamma\backslash A) = \mu(A\cap F)$.  We prove the following theorem.

\begin{theorem}\label{thm:ind-set}
Let $r>0$, and let $\Gamma\subset\PSL$ be a finite covolume Fuchsian group. If a set $I\subseteq \Hyp$ is distance $r$ avoiding and $\Gamma$-invariant, then
$$
\mu(\Gamma\backslash I) \leq (r+1)e^{-\frac{r}{2}} \mu(\Gamma\backslash\Hyp).
$$
\end{theorem}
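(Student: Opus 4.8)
The plan is to deduce Theorem~\ref{thm:ind-set} directly from Theorem~\ref{thm:main} by applying the latter to the indicator function of the set $I$. Since $I$ is $\Gamma$-invariant and measurable, its indicator function $\one_I$ descends to a well-defined measurable function on the quotient $\Gamma\backslash\Hyp$. As $\Gamma$ has finite covolume and $\one_I$ is bounded by $1$, we have $\one_I\in L^2(\Gamma\backslash\Hyp)$, and of course $\one_I\geq 0$; thus the two standing hypotheses of Theorem~\ref{thm:main} are immediate.

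The one substantive point to verify is the orthogonality relation $\langle A_r\one_I,\one_I\rangle=0$. Here I would unwind the definition of $A_r$ as the spherical average: for $\mu$-almost every $x$, the value $(A_r\one_I)(x)$ equals the $\sigma_r$-measure of the set of points $y$ at distance exactly $r$ from $x$ with $y\in I$, where $\sigma_r$ is the uniform probability measure on the circle of radius $r$. Consequently
\[
\langle A_r\one_I,\one_I\rangle=\int_{\Gamma\backslash\Hyp}(A_r\one_I)(x)\,\one_I(x)\,d\mu(x),
\]
and the integrand can be nonzero only at those $x\in I$ for which the circle of radius $r$ about $x$ meets $I$ in positive $\sigma_r$-measure.

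The key observation is that this never occurs. Indeed, if $x\in I$, then the distance $r$ avoiding property of $I$ forces the entire circle $\{y:d(x,y)=r\}$ to be disjoint from $I$, since any point of this circle lying in $I$ would produce a pair of points of $I$ at distance exactly $r$. Hence $(A_r\one_I)(x)=0$ whenever $\one_I(x)=1$, the product $(A_r\one_I)(x)\,\one_I(x)$ vanishes for every $x$, and the integral is zero. With all hypotheses of Theorem~\ref{thm:main} in place, applying it to $f=\one_I$ gives
\[
\frac{\mu(\Gamma\backslash I)}{\mu(\Gamma\backslash\Hyp)}=\frac{\mu(\supp\one_I)}{\mu(\Gamma\backslash\Hyp)}\leq\frac{r+1}{e^{r/2}},
\]
which is exactly the claimed bound, upon identifying the essential support of $\one_I$ with $I$ up to a null set so that $\mu(\supp\one_I)=\mu(\Gamma\backslash I)$.

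I expect no serious obstacle, as the whole content of Theorem~\ref{thm:ind-set} is already packaged in Theorem~\ref{thm:main}; the reduction rests only on translating the combinatorial ``distance $r$ avoiding'' condition into the analytic orthogonality relation. The only points needing mild care are the measurability and $L^2$-membership of $\one_I$ on the quotient, and the identification of its support with $I$ up to measure zero, both routine under the finite covolume assumption.
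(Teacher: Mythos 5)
Your argument has a genuine, structural gap: it is circular relative to this paper. The implication you establish is fine \emph{as an implication} --- $\chi_I$ lies in $L^2(\Gamma\backslash\Hyp)$ by finite covolume, is nonnegative, and the pointwise avoiding property forces $(A_r\chi_I)(x)=0$ for every $x\in I$, hence $\langle A_r\chi_I,\chi_I\rangle=0$, so Theorem~\ref{thm:main} applied to $f=\chi_I$ would give exactly the stated bound. But the paper's logical order is the opposite of what you assume: Theorem~\ref{thm:ind-set} is proved first, and Theorem~\ref{thm:main} is then \emph{deduced from it}, via a Lebesgue density-point argument showing that the support of a nonnegative non-autocorrelated $f$ contains, up to a null set, no pair of points joined by a geodesic of length $r$, i.e.\ is itself (essentially) a periodic distance-$r$ avoiding set. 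By invoking Theorem~\ref{thm:main} you therefore assume a result whose only available proof passes through the very statement you were asked to prove; your verification that $I$ is $A_r$-independent is precisely the easy translation step common to both directions, and all of the quantitative content is missing.

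What an actual proof requires is the spectral work of Section~\ref{sec:proof}: one applies the Hoffman-type bound of Bachoc, DeCorte, de Oliveira and Vallentin (Theorem~\ref{thm:hoffman}) to $A_r$ acting on $L^2(\Gamma\backslash\Hyp)$. Finite covolume puts the constant function in $L^2(\Gamma\backslash\Hyp)$, where it is an $A_r$-eigenfunction with eigenvalue $1$; hence $M(A_r)=1$ and one may take $R=1$ and $\e=0$ in Theorem~\ref{thm:hoffman}. The spectrum of $A_r$ is controlled through the unitary dual (Lemma~\ref{lem:num-range}): the complementary-series values $P_{-\frac{1}{2}+is}(\cosh r)$ with $is\in[-\frac{1}{2},\frac{1}{2}]$ are positive by the integral representation of the Legendre function, while on the principal series one has $\left|P_{-\frac{1}{2}+is}(\cosh r)\right|\le (r+1)e^{-\frac{r}{2}}$, giving $m(A_r)\ge -(r+1)e^{-\frac{r}{2}}$. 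The Hoffman bound then yields
\[
\frac{\mu(\Gamma\backslash I)}{\mu(\Gamma\backslash\Hyp)}\le\frac{(r+1)e^{-\frac{r}{2}}}{1+(r+1)e^{-\frac{r}{2}}}\le (r+1)e^{-\frac{r}{2}}.
\]
None of this appears in your proposal, so as written it does not prove the theorem. Had you an independent proof of Theorem~\ref{thm:main} in hand, your reduction would be a correct and clean derivation --- the two statements are indeed essentially equivalent, which is exactly why the paper proves them in the order it does --- but no such independent proof is supplied.
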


Every complete finite volume hyperbolic surface $X$ admits a Fuchsian model, that is, it can be presented as a quotient $\Gamma\backslash \Hyp$ for some Fuchsian group $\Gamma\subset \PSL$, see~\cite{katok1992fuchsian} for more details. Let $X(r)$ denote the graph on $X = \Gamma\backslash \Hyp$, where two points $\Gamma z$ and $\Gamma z'$ are connected by an edge iff there exists $\gamma \in \Gamma$ such that $\gamma z$ and $z'$ are at distance $r$ from each other on $\Hyp$. The following result is a direct corollary of Theorem~\ref{thm:ind-set}.

\begin{corollary}\label{cor:main}
 Let $r$ be a positive number, $X$ be a complete hyperbolic surface of finite volume. Then 
 $$
 \chi_m(X(r)) \geq e^{\frac{r}{2}}/{(r+1)}.
 $$
\end{corollary}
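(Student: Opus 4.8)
The plan is to reduce the statement to Theorem~\ref{thm:ind-set} via the standard observation that the color classes of a proper coloring are independent sets. Suppose $\chi_m(X(r)) = k$ is finite (otherwise there is nothing to prove), and fix a proper measurable coloring $c : X \to \{1,\dots,k\}$. Writing $C_i = c^{-1}(i)$ for the $i$-th color class, the sets $C_1,\dots,C_k$ are measurable, pairwise disjoint, and cover $X$ up to a null set, and each $C_i$ is an independent set of $X(r)$ by properness of $c$; that is, no two distinct points of $C_i$ are joined by an edge of $X(r)$.

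First I would pass from the quotient back up to $\Hyp$. Let $\pi : \Hyp \to X = \Gamma\backslash\Hyp$ be the quotient map and set $I_i = \pi^{-1}(C_i)$; this set is $\Gamma$-invariant and measurable, with $\mu(\Gamma\backslash I_i) = \mu(C_i)$. The key point is that $I_i$ is distance $r$ avoiding: if $w, w' \in I_i$ satisfy $d(w,w') = r$ and $\pi(w) \neq \pi(w')$, then the two distinct vertices $\pi(w), \pi(w') \in C_i$ are joined by an edge of $X(r)$ (take $\gamma$ to be the identity in the defining condition $d(\gamma w, w') = r$), contradicting the independence of $C_i$. The only remaining possibility is $\pi(w) = \pi(w')$, i.e. $w' = \gamma w$ with $d(w, \gamma w) = r$ for some $\gamma \in \Gamma$; the set of such $w$ is a countable union of the level sets $\{w : d(w,\gamma w) = r\}$, each of measure zero, and can be discarded without changing any of the measures involved. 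This last point — verifying that the self-paired points form a null set, so that each $I_i$ is genuinely distance $r$ avoiding — is the only place requiring care, and is where I expect the main (albeit minor) obstacle to lie.

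With this in hand, Theorem~\ref{thm:ind-set} applies to each $I_i$ and gives $\mu(C_i) = \mu(\Gamma\backslash I_i) \le (r+1)e^{-r/2}\,\mu(\Gamma\backslash\Hyp)$. Summing over $i$ and using that the $C_i$ partition $X$ up to a null set, I obtain
\[
\mu(\Gamma\backslash\Hyp) = \sum_{i=1}^{k} \mu(C_i) \le k\,(r+1)e^{-r/2}\,\mu(\Gamma\backslash\Hyp).
\]
Dividing by $\mu(\Gamma\backslash\Hyp) > 0$ yields $1 \le k\,(r+1)e^{-r/2}$, hence $k \ge e^{r/2}/(r+1)$, which is exactly the claimed bound since $k = \chi_m(X(r))$.
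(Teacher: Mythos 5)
Your proposal is correct and is exactly the argument the paper intends: Corollary~\ref{cor:main} is stated there as a ``direct corollary'' of Theorem~\ref{thm:ind-set}, obtained precisely by lifting each measurable color class to a $\Gamma$-invariant distance $r$ avoiding subset of $\Hyp$, applying the density bound, and summing over the $k$ classes. Your extra care about the self-paired points is sound and handles the one detail the paper leaves implicit: for each $1\neq\gamma\in\Gamma$ the level set $\{w : d(w,\gamma w)=r\}$ is an equidistant curve, circle, or horocycle (according as $\gamma$ is hyperbolic, elliptic, or parabolic), hence null, and this $\Gamma$-invariant countable union can indeed be discarded without affecting $\mu(\Gamma\backslash I_i)$.
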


In~\cite{Par2016}, Parlier and Petit proved an upper bound on the chromatic number of $X(r)$ (exponential in $r$ and independent of $X$) and constructed a family of surfaces for which an exponential lower bound holds. 
\begin{theorem}[Parlier and Petit,~\cite{Par2016}]\label{thm:PP-upper-bound}
There exists a constant $C_1 > 0$ such that for every number $r > 0$ every complete hyperbolic surface $X$ satisfies
$$
 \chi(X(r)) \leq C_1 e^r.
$$
There exists a constant $C_2 > 0$  and a family of complete hyperbolic surfaces $X_r$, $r > 0$, so that
$$
 \chi(X_r(r)) \geq C_2 e^{r/2}.
$$
\end{theorem}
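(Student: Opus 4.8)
The plan is to prove the two halves of Theorem~\ref{thm:PP-upper-bound} separately, since the upper bound is a direct geometric covering argument valid for all surfaces, while the lower bound requires exhibiting a specific family.

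\textbf{Upper bound.} For the bound $\chi(X(r)) \leq C_1 e^r$ the plan is to produce an explicit coloring whose color classes are ``small'' sets on which the distance $r$ is never realized. First I would fix a small threshold $\delta > 0$ (independent of $r$, say $\delta = 1/2$) and cover the surface $X$ by a finite collection of sets each of diameter less than $\delta$; on each such piece no two points are at distance $r$ provided $r > \delta$, so each piece is an independent set in $X(r)$. The issue is that naively this uses infinitely many, or uncontrollably many, pieces, so instead I would group the pieces by how far they sit from each other and color so that two pieces sharing a color are never at distance exactly $r$. Concretely, the strategy is to triangulate or grid $X$ into fundamental cells and use the exponential volume growth of $\Hyp$: a ball of radius $r$ in $\Hyp$ has area on the order of $e^r$, so any point of $X(r)$ has ``many'' potential neighbors, but the key combinatorial observation is that the $r$-annulus around a small cell meets only $O(e^r)$ cells of bounded size. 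One then invokes a greedy or a bounded-degree coloring argument: the graph on the (finitely many per fundamental domain) cells, with an edge whenever two cells contain a pair of points at distance $r$, has maximum degree $O(e^r)$, and a greedy coloring uses at most one more color than the maximum degree. Tracking the constants yields $\chi(X(r)) \leq C_1 e^r$ with $C_1$ absolute. The main obstacle here is making the covering uniform over all surfaces $X$ simultaneously, which I would handle by working upstairs in $\Hyp$ with a $\Gamma$-invariant, $\Gamma$-equivariant partition and pushing the coloring down to the quotient; since the local geometry of $\Hyp$ is homogeneous, the degree bound $O(e^r)$ is independent of $\Gamma$.

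\textbf{Lower bound.} For $\chi(X_r(r)) \geq C_2 e^{r/2}$ the plan is to construct, for each $r$, a surface $X_r$ containing a large ``clique-like'' or high-density subgraph forcing many colors. The cleanest route is to use the relation between the chromatic number and the independence ratio together with Theorem~\ref{thm:ind-set}: since any independent set $I$ in $X(r)$ is exactly a distance $r$ avoiding $\Gamma$-invariant set, Corollary~\ref{cor:main} already gives $\chi_m(X(r)) \geq e^{r/2}/(r+1)$ for \emph{every} finite-volume $X$. However, Theorem~\ref{thm:PP-upper-bound} as stated concerns the ordinary (not measurable) chromatic number, so this spectral bound does not immediately apply; instead I would build $X_r$ to contain an explicit large complete subgraph or a dense finite unit-distance-type graph. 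The construction I would pursue is to take a surface with a large systole and embed a packing of $\asymp e^{r/2}$ points that are pairwise at distance exactly $r$ (or within the same edge-relation), for instance by placing points around a long closed geodesic of length $\asymp e^{r/2}$ spaced so consecutive or suitably-separated points realize distance $r$. Because the chromatic number is at least the clique number, and a clique of size $\asymp e^{r/2}$ forces $\asymp e^{r/2}$ colors, this yields the bound. The hard part will be arranging the geometry so that a genuinely large set of points is pairwise at distance exactly $r$ (hyperbolic geometry makes equidistant configurations delicate, since an $r$-ball has circumference $\asymp e^{r/2}$, which is precisely why $e^{r/2}$ rather than $e^r$ is the natural scale for the lower bound); I would resolve this by choosing $X_r$ with a geodesic whose length is a carefully tuned multiple of the relevant spacing, so that the points wrap around and the pairwise-distance condition is met through the nontrivial homotopy of the surface rather than within a single embedded disk.

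In summary, the upper bound follows from a uniform $\Gamma$-equivariant covering of $\Hyp$ by bounded-diameter cells combined with the $O(e^r)$ volume growth and a greedy degree bound, while the lower bound follows from engineering surfaces $X_r$ whose nontrivial topology supports an $\asymp e^{r/2}$-element pairwise-distance-$r$ configuration, forcing that many colors. I expect the geometric realization of the clique in the lower bound to be the principal technical difficulty, with the uniformity of the covering constant across all $X$ the main care-point in the upper bound.
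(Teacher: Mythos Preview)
The paper does not contain a proof of this statement: Theorem~\ref{thm:PP-upper-bound} is quoted as a result of Parlier and Petit and attributed to~\cite{Par2016}, and no argument for it is given anywhere in the text (Section~\ref{sec:proof} only proves Theorems~\ref{thm:main} and~\ref{thm:ind-set}). So there is nothing in the paper to compare your proposal against.

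That said, a brief comment on the proposal itself. Your upper-bound sketch is along the right lines: a $\Gamma$-equivariant covering of $\Hyp$ by cells of bounded diameter, together with the fact that the $r$-annulus around a cell meets $O(e^{r})$ other cells, and a greedy coloring, is exactly the sort of argument one expects and is close to what Parlier and Petit actually do.

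Your lower-bound plan, however, has a real gap. You aim to exhibit $\asymp e^{r/2}$ points in some $X_r$ that are \emph{pairwise} at distance exactly $r$, i.e.\ a clique of that size in $X_r(r)$. This is essentially impossible to arrange: having many points mutually at a single fixed distance is an extremely rigid condition on any Riemannian surface, and ``placing points around a long closed geodesic'' does not produce it, since points spaced along a geodesic realize many different pairwise distances rather than a common one. The correct scale $e^{r/2}$ in Parlier--Petit does not come from a clique; it comes from an independence/packing argument. Roughly, one chooses $X_r$ so that an embedded disk of radius slightly larger than $r$ sits inside it, and observes that any color class, being distance-$r$ avoiding, intersects such a disk in a set that can be pushed into a thin annulus of width comparable to $1$ and circumference $\asymp e^{r/2}$; comparing areas forces $\asymp e^{r/2}$ colors. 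If you want to repair your argument, replace the clique construction by this kind of area-vs-independent-set estimate on a surface containing a large embedded ball.
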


Note that the chromatic number $\chi(X(r))$ of $X(r)$ is always less than or equal to the measurable one $\chi_m(X(r))$. The upper bound for $\chi(X(r))$ is achieved by construcing periodic colorings on $\Hyp$. It seems like the proof~\cite{Par2016} can be adjusted to give a bound for the measurable chromatic as well, but as it is given in the paper, it requires a random choice of color for points in possibly a measurable subset, which may lead to non-measurable color class.

The circumference of the circle of radius $r$ on the hyperbolic plane is $2\pi \sinh{r}$ can be thought as the degree of a vertex in the graph $X(r)$. Then the upper bound in Theorem~\ref{thm:PP-upper-bound} can be interpreted as a generalization of the bound of $k+1$ on the chromatic number of a $k$-regular graph, proved by~\cite{brooks1941}. Continuing this analogy, Corollary~\ref{cor:main} would read as lower bound of order $\sqrt{k}/\log{k}$ on the chromatic number of a $k$-regular graph. While for finite graphs this is far from being true (as for instance, there are bipartite graphs), the corollary above shows that it holds for graphs $X(r)$ for all hyperbolic surfaces of finite volume.

\paragraph*{Structure of the Paper.} Section~\ref{sec:prel} contains the needed preliminaries. Section~\ref{sec:proof} contains a proof of Theorem~\ref{thm:main}. We conclude with Section~\ref{sec:nevo}, where we discuss relation of Theorem~\ref{thm:main} to the known results.

\section{Preliminaries}\label{sec:prel}

In this section, we provide the required preliminaries. The material here largely overlaps with the corresponding sections in \cite{DeCorte2019} and \cite{Golubev2019} coauthored with Evan DeCorte and Amitay Kamber, respectively.

\subsection{The Hoffman bound for the measurable independence ratio of an infinite graph}\label{sec:hoffmangeneral}

The Hoffman bound, proved in~\cite{hoffman1970eigenvalues}, is a well-known spectral upper bound for the independence ratio of a finite graph. Hoffman proved a bound for the chromatic number, but later a different proof, via upper-bounding the independence ratio, appeared. Recall that the chromatic number $\chi(G)$ of a graph $G$ is the minimum number of colors needed in order to color the vertices in such a way that no edge is monochromatic, while the independence ratio $\alpha(G)$ is the cardinality of the largest subset of vertices that does not contain an edge normalized by the cardinality of the vertex set. In particular, these numbers satisfy $\alpha(G)\chi(G) \geq 1$. In this section, we provide the reader the necessary background for understanding the Hoffman bound applied to infinite graphs detailed in \cite{bachoc14}. We begin with the usual statement of Hoffman's theorem for the context of a finite regular graph.

\begin{theorem}[Hoffman,~\cite{hoffman1970eigenvalues}]\label{thm:hoffman-finite} 
	Let $G$ be a finite regular graph with at least one edge, and let $A$ be its adjacency matrix. Denote the maximum and, resp., minimum eigenvalues of $A$ by $M$ and $m$. Then
	\[
		\alpha(G) \leq \frac{-m}{M-m},
	\]
	and since $\alpha(G)\chi(G) \geq 1$,
	\[
	  \chi(G) \geq \frac{M-m}{-m}.
	\]

\end{theorem}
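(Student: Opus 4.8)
The plan is to apply the spectral decomposition of the indicator vector of a maximum independent set and exploit the fact that the associated quadratic form vanishes. Let $n$ denote the number of vertices, let $d$ be the common degree, and let $S$ be an independent set achieving $|S| = \alpha(G)\,n$. Write $\mathbf{1}_S$ for its indicator vector and $\mathbf{1}$ for the all-ones vector. Since $S$ contains no edge, the number of edges induced on $S$ is zero, which translates into the identity $\mathbf{1}_S^{\top} A\, \mathbf{1}_S = 0$.

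Next I would use regularity. Because $G$ is $d$-regular, $\mathbf{1}$ is an eigenvector of $A$ with eigenvalue $d$, and a standard Perron--Frobenius argument shows $M = d$. I decompose $\mathbf{1}_S = \frac{|S|}{n}\mathbf{1} + w$, where $w \perp \mathbf{1}$. Since $A$ is symmetric and $A\mathbf{1} = M\mathbf{1}$, both $w$ and $Aw$ remain orthogonal to $\mathbf{1}$, so the cross terms drop out and
\[
0 = \mathbf{1}_S^{\top} A\, \mathbf{1}_S = M\,\frac{|S|^2}{n} + w^{\top} A\, w.
\]
Because $m$ is the smallest eigenvalue of $A$, the Rayleigh quotient obeys $w^{\top} A\, w \geq m\,\|w\|^2$, and from $\|\mathbf{1}_S\|^2 = |S|$ together with the orthogonal decomposition I obtain $\|w\|^2 = |S| - |S|^2/n$.

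Substituting these into the displayed identity and dividing through by $|S| > 0$ yields $0 \geq M\,\tfrac{|S|}{n} + m\,\bigl(1 - \tfrac{|S|}{n}\bigr)$. Writing $\rho = |S|/n = \alpha(G)$, this rearranges to $\rho\,(M - m) \leq -m$, which is the claimed bound; the chromatic number statement then follows at once from $\alpha(G)\chi(G) \geq 1$.

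The main point requiring care, rather than a genuine obstacle, is to confirm that the denominator $M - m$ is positive and the bound nontrivial. Since $G$ has at least one edge, $A \neq 0$ while $\operatorname{tr} A = 0$, which forces a strictly negative eigenvalue, so $m < 0$ and $M - m > 0$. One must also apply $w^{\top}Aw \geq m\|w\|^2$ correctly to $w$ lying in the orthogonal complement of the top eigenvector; bounding by the global minimum eigenvalue $m$ suffices here, though this is precisely the step where additional spectral information (if $w$ avoided further eigenspaces) would sharpen the constant.
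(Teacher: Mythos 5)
Your proof is correct, but note that the paper never actually proves Theorem~\ref{thm:hoffman-finite}: it quotes the statement from Hoffman's paper \cite{hoffman1970eigenvalues} and merely remarks that ``the proof \ldots{} can be generalized to infinite graphs,'' deferring to Theorem~\ref{thm:hoffman} of \cite{bachoc14}. So there is no in-paper argument to compare against line by line; what you have written is the standard independence-ratio (``ratio bound'') proof --- decompose the indicator as $\mathbf{1}_S = \frac{|S|}{n}\mathbf{1} + w$ with $w \perp \mathbf{1}$, use $\mathbf{1}_S^{\top}A\mathbf{1}_S = 0$, kill the cross terms via $A\mathbf{1} = M\mathbf{1}$, and bound $w^{\top}Aw \geq m\|w\|^2$ --- and this is precisely the ``different proof, via upper-bounding the independence ratio'' that the paper alludes to (Hoffman's original argument bounded $\chi$ directly). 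It is also exactly the template that \cite{bachoc14} adapts to the infinite setting, where the eigenvector $\mathbf{1}$ is replaced by the all-one function $1_X$ and the failure of $A1_X = R\cdot 1_X$ is absorbed into the error term $\e$ of Theorem~\ref{thm:hoffman}; your computation is the $\e = 0$ case, matching how the paper applies the bound in the proof of Theorem~\ref{thm:ind-set} with $R = M(A_r) = 1$. All steps check: $\|w\|^2 = |S| - |S|^2/n$ is right, division by $|S|>0$ is licit since singletons are independent, and you correctly verify $m < 0$ and $M - m > 0$ from $\operatorname{tr} A = 0$ and $A \neq 0$. One cosmetic point: invoking Perron--Frobenius for $M = d$ is heavier than needed (and its strong form assumes connectivity); the row-sum bound $\|A\| \leq d$ together with $A\mathbf{1} = d\mathbf{1}$ gives $M = d$ for any finite $d$-regular graph, connected or not.
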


Note that the $m$ in Theorem~\ref{thm:hoffman-finite} will always be negative, since the trace of the adjacency matrix is $0$. The proof of Theorem \ref{thm:hoffman-finite} can be generalized to infinite graphs, provided correct analogues of notions such as \emph{adjacency matrix}, \emph{independence} and \emph{coloring} can be found. We now give those definitions and state the infinite version of Hoffman's bound, following \cite{bachoc14}.

Now let $(X, \Sigma, \mu)$ be a measure space. Write $L^2(X)$ for $L^2(X, \mu)$, we let $A~:~L^2(X)\to~L^2(X)$ be a bounded, self-adjoint operator. We say that a measurable subset
$I \subseteq X$ is \emph{$A$-independent} if
\begin{align}\label{eq:ind}
	\langle Af, f \rangle = 0
\end{align}
whenever $f \in L^2(X)$ is a function which vanish almost everywhere outside $I$. Assume now that $\mu(X) < \infty$, then the independence ratio of the operator $A$ is
\[
 \alpha(A) = \sup\left\{\frac{\mu(I)}{\mu(X)}\mid I \text{ is independent}\right\}.
\]

When $I$ is an independent vertex subset of a finite graph $G$, and $A$ is the adjacency matrix of $G$, then one easily verifies \eqref{eq:ind} for all $f$ supported on $I$. One may therefore regard the operator $A$ as playing the role of an adjacency operator. 

An \emph{$A$-measurable coloring} is a partition $X = \dot{\bigcup}_i C_i$ of $X$ into $A$-independent sets. In this case the sets $C_i$ are called \emph{color classes}. The \emph{$A$-chromatic number of $X$}, denoted $\chi_A(X)$, is the smallest number $k$ (possibly $\infty$), such that there exists an $A$-measurable coloring using only $k$ color classes.

Recall that operators on infinite-dimensional Hilbert spaces need not have eigenvectors, even when they are self-adjoint, so Theorem~\ref{thm:hoffman-finite} does not immediately extend to the infinite case. It turns out that for our purposes, the correct analogues of the $M$ and $m$ of Theorem~\ref{thm:hoffman-finite} are given by the following definitions.
\begin{align*}
	M(A) = \sup_{\|f\|_2 = 1} \langle Af, f \rangle,\\
	m(A) = \inf_{\|f\|_2=1} \langle Af, f \rangle.
\end{align*}

The numbers $\langle Af, f \rangle$ are real since $A$ is self-adjoint, and $M(A)$ and $m(A)$ are finite since $A$ is bounded. In \cite{bachoc14}, the following extension of Hoffman's bound is proven.

\begin{theorem}[Bachoc, DeCorte, de Oliveira and Vallentin,~\cite{bachoc14}]\label{thm:hoffman}
	Let $(X, \Sigma, \mu)$ be a probability space and suppose $A~:~L^2(X)\to~L^2(X)$ is a nonzero, bounded, self-adjoint operator. Denote by $1_X\in L^2(X)$ the all-one function on $X$, fix $R\in \R$, and let $\e = \|A1_X - R\cdot1_X\|$. Then
	\[
		\chi_A(X) \geq \frac{M(A)-m(A)}{-m(A)}.
	\]
	and, if $R-m(A)-\e > 0$, we have
	\[
	 \alpha(A) \leq \frac{-m(A)+2\e}{R-m(A)-\e}.
	\]
\end{theorem}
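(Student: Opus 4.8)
The plan is to transport the two finite-dimensional arguments behind Hoffman's Theorem~\ref{thm:hoffman-finite} to the operator setting. In the finite regular case the proof rests on the all-one vector being the top eigenvector of the adjacency matrix; here the all-one function $1_X$ is only an \emph{approximate} eigenvector, with defect $h:=A1_X-R\,1_X$ satisfying $\|h\|=\e$, and the extreme eigenvalues of the adjacency matrix are replaced by the Rayleigh-quotient extrema $M(A)$ and $m(A)$. Throughout I use $\|1_X\|=1$, since $(X,\Sigma,\mu)$ is a probability space, and I treat the two conclusions separately.

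For the independence ratio bound, let $I$ be an $A$-independent set and put $\alpha=\mu(I)$; I may assume $\alpha>0$. Its indicator $1_I$ vanishes outside $I$, so by definition of independence $\langle A1_I,1_I\rangle=0$. I would decompose $1_I=\alpha\,1_X+g$ with $g\perp 1_X$, so that $\|g\|^2=\mu(I)-\alpha^2=\alpha(1-\alpha)$. Expanding $\langle A1_I,1_I\rangle$, using self-adjointness together with $A1_X=R\,1_X+h$ and $\langle 1_X,g\rangle=0$, gives \[0=\alpha^2 R+\alpha^2\langle h,1_X\rangle+2\alpha\langle h,g\rangle+\langle Ag,g\rangle.\] I then bound the last term below by $\langle Ag,g\rangle\ge m(A)\,\|g\|^2=m(A)\,\alpha(1-\alpha)$ via the definition of $m(A)$, and control the two $h$-terms by Cauchy--Schwarz, $|\langle h,1_X\rangle|\le\e$ and $|\langle h,g\rangle|\le\e\|g\|\le\e$. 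Dividing by $\alpha$ and collecting the terms linear in $\alpha$ yields $\alpha\,(R-m(A)-\e)\le -m(A)+2\e$, which is exactly the claimed bound once the hypothesis $R-m(A)-\e>0$ permits dividing without reversing the inequality.

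For the chromatic bound I would argue variationally rather than through the independence ratio (this is why no $\e$ appears). Fix $\delta>0$ and pick a unit $\phi$ with $\langle A\phi,\phi\rangle\ge M(A)-\delta$. Given an $A$-measurable coloring $X=\dot{\bigcup}_{i=1}^{k} C_i$, restrict $\phi$ to the color classes, $\phi_i:=\phi\cdot 1_{C_i}$, and normalise $\hat\phi_i:=\phi_i/\|\phi_i\|$ (discarding empty classes). Since the $C_i$ are disjoint the $\hat\phi_i$ are orthonormal, so the real symmetric $k\times k$ matrix $B_{ij}:=\langle A\hat\phi_i,\hat\phi_j\rangle$ is the compression of $A$ to their span. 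Every Rayleigh quotient of $B$ equals $\langle A\psi,\psi\rangle$ for a unit $\psi$ in that span, whence all eigenvalues of $B$ lie in $[m(A),M(A)]$; in particular $m(A)<0$ once $M(A)>0$, making the denominator positive. Independence forces the diagonal to vanish, $B_{ii}=\|\phi_i\|^{-2}\langle A\phi_i,\phi_i\rangle=0$, so $\operatorname{tr}B=0$. Finally the unit vector $w=(\|\phi_1\|,\dots,\|\phi_k\|)^{\top}$ satisfies $w^{\top}Bw=\sum_{i,j}\langle A\phi_i,\phi_j\rangle=\langle A\phi,\phi\rangle\ge M(A)-\delta$, so the top eigenvalue of $B$ is at least $M(A)-\delta$. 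Writing $0=\operatorname{tr}B$ as this top eigenvalue plus the remaining $k-1$ eigenvalues, each $\ge m(A)$, gives $0\ge (M(A)-\delta)+(k-1)m(A)$; letting $\delta\to0$ and rearranging yields $\chi_A(X)=k\ge (M(A)-m(A))/(-m(A))$.

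The step I expect to be the main obstacle is the chromatic bound, specifically recognising that the naive expansion $\langle A\phi,\phi\rangle=\sum_{i\neq j}\langle A\phi_i,\phi_j\rangle$ only produces the useless lower bound $\langle A\phi,\phi\rangle\ge (k-1)m(A)$ and does not constrain $k$ at all. The resolution is to pass to the finite compression $B$, whose vanishing trace is precisely what converts ``one large eigenvalue near $M(A)$'' into a lower bound on the number of classes; this is the operator analogue of eigenvalue interlacing for the quotient matrix in the finite proof. A secondary point requiring care is the bookkeeping of the two $h$-terms in the independence argument: keeping the factor $\alpha$ attached to $\langle h,1_X\rangle$, instead of bounding it by $\e$ outright, is what lands the constant at $2\e$ and pins down the denominator $R-m(A)-\e$ in the stated form.
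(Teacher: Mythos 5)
Note first that the paper you were asked to match contains no proof of Theorem~\ref{thm:hoffman}: it is quoted verbatim from \cite{bachoc14}, so there is no in-paper argument to compare against, and your proposal has to be judged as a self-contained proof. On that basis it is correct. The independence-ratio half is exactly the computation behind the cited result: writing $1_I=\alpha 1_X+g$ with $g\perp 1_X$, $\|g\|^2=\alpha(1-\alpha)$, using $\langle A1_I,1_I\rangle=0$, the lower bound $\langle Ag,g\rangle\ge m(A)\alpha(1-\alpha)$, and Cauchy--Schwarz on the defect $h=A1_X-R\,1_X$ yields $\alpha\bigl(R-m(A)-\e\bigr)\le -m(A)+2\e$; your remark about keeping the factor $\alpha$ on the $\langle h,1_X\rangle$ term is indeed what produces the asymmetric constants $2\e$ versus $\e$. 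The chromatic half via the zero-diagonal compression $B_{ij}=\langle A\hat\phi_i,\hat\phi_j\rangle$, whose eigenvalues lie in $[m(A),M(A)]$ and whose trace vanishes while $w^{\top}Bw\ge M(A)-\delta$, is sound; an alternative packaging of the same estimate, which avoids any spectral analysis of $B$, is to sum $\langle A(\phi_i-\phi_j),\phi_i-\phi_j\rangle\ge m(A)\|\phi_i-\phi_j\|^2$ over pairs of color classes, which collapses to $\langle A\phi,\phi\rangle\le (k-1)(-m(A))$ directly; both arguments deliver $k\ge 1+M(A)/(-m(A))=(M(A)-m(A))/(-m(A))$.

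Two small points you should state explicitly rather than leave implicit. First, the division by $-m(A)$ requires $m(A)<0$; as you observe, this does follow from $\operatorname{tr}B=0$ together with $\lambda_{\max}(B)\ge M(A)-\delta>0$ once a finite coloring exists and $M(A)>0$, and when $M(A)\le 0$ the claimed bound is at most $1$ and hence trivial (note $k\ge 2$ whenever a coloring exists, since $X$ itself being $A$-independent would force $\langle Af,f\rangle=0$ for all $f$ and hence $A=0$ by self-adjointness). Second, both conclusions are vacuous in the degenerate cases $\chi_A(X)=\infty$ and $\mu(I)=0$, so your standing assumptions (a finite coloring, $\alpha>0$, discarding classes with $\|\phi_i\|=0$) lose no generality; saying so in one sentence would close the argument cleanly.
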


It should be noted that unlike in Theorem~\ref{thm:hoffman-finite} where
$A$ is simply the adjacency matrix, with infinite graphs there may be no
canonical choice for $A$.
The choice of $A$ determines which sets are to be considered as ``independent'',
and thus admissible as color classes. When applying Theorem~\ref{thm:hoffman}
for an infinite graph $G$ on a measurable vertex set,
one therefore typically chooses $A$ to define a class of independent sets which is larger that the class of true measurable independent sets of $G$, for then the measurable chromatic
number of $G$ is at least $\chi_A(X)$. 

Also note that Theorem~\ref{thm:hoffman} applies equally well when $X$ is finite. Theorem~\ref{thm:hoffman} then says that to obtain a lower bound on the chromatic number of a finite graph, one may optimize over symmetric matrices satisfying \eqref{eq:ind}. For finite graphs this was developed in~\cite{lovasz79} and gave rise to the notion of the Lov\'asz $\theta$-number.

\subsection{The hyperbolic plane}

There are several models for the hyperbolic plane $\Hyp$ of constant curvature $-1$, and we stick to the upper half-plane model. That is, the complex half-plane $\Hyp = \{z=x+iy\in\mathbb{C}\mid\text{Im}(z)>0\}$ endowed with the metric $(ds)^2=\frac{(dx)^2+(dy)^2}{y^{2}}$. The distance $d(z,z')$ between $z=x+iy,\:z'=x'+iy'\in\mathbb{H}$ in this model can be calculated as 
\begin{equation*}
d\left(z,z'\right)=\text{acosh\ensuremath{\left(1+\frac{\left(x'-x\right)^{2}+\left(y'-y\right)^{2}}{2yy'}\right).}}
\end{equation*}
The group $G=PSL_{2}(\R)$ acts on $\mathbb{H}$ by M\"obius transformations,
i.e., 
\begin{equation*}
\left(\begin{array}{cc}
a & b\\
c & d
\end{array}\right)\cdot z=\frac{az+b}{cz+d},
\end{equation*}
and constitutes the group of orientation preserving isometries of $\mathbb{H}$. In this paper, we identify an element $g\in G=PSL_{2}(\R)$ with its preimage in $SL_{2}(\R)$, i.e., omit the $\pm$ sign. The group $G$ acts transitively on the points of $\mathbb{H}$, with the subgroup $K=PSO_{2}(\mathbb{R})\subset G$ being the stabilizer of the point $i$, to which we refer as the origin of $\Hyp$. The subgroup $K$ acts on $\Hyp$ by rotations around $i$. The plane $\mathbb{H}$ can be identified with the quotient $G/K$, and in particular, the circle of radius $r$ around $i$ identifies with the double coset 
\[                                                                                                                                                                                                                                                                                                                                                                                                                                                                                                                                                                                                          
K\left(\begin{array}{cc}
e^{r/2} & 0\\
0 & e^{-r/2}
\end{array}\right)K.                                                                                                                                                                                                                                                                                                                                                                                                                                                                                                                                                                                                        \]
The Haar measure on $G$ which is normalized so that the measure of $K$ is equal to $1$ agrees with the standard measure $\mu$ on $\Hyp$.

\subsection{Harmonic analysis on $\mathbb{H}$}

For $f\in L^{1}\left(\Hyp\right)$, its Helgason-Fourier transform $\widehat{f}(s,k)\in C\left(\C\times K\right)$, is defined as 
\begin{equation*}
\widehat{f}(s,k)=\int_{\mathbb{H}}f(z)\overline{\left(\text{Im}(kz)\right)^{\frac{1}{2}+is}}dz,
\end{equation*}
for $s\in\mathbb{C}$ and $k\in K=PSO_{2}(\mathbb{R})$ whenever the integral exists.

In the case when $f$ is $K$-invariant, i.e., $f(kz)=f(z)$ for all $z\in\Hyp$ and $k\in K$, its transform is independent of $k$ and can be written with the help of the spherical functions. For every
$s\in\C$, the corresponding spherical function is a $K$-invariant function on $\Hyp$ defined as 
\begin{equation*}
\varphi_{\frac{1}{2}+is}(z)=\int_{K}\overline{\left(\text{Im}\left(kz\right)\right)^{\frac{1}{2}+is-2}}dk.
\end{equation*}
Since $\varphi_{\frac{1}{2}+is}$ is $K$-invariant, it depends solely on the hyperbolic distance from a point to the origin $i$, and can be written as
\begin{equation*}
\varphi_{\frac{1}{2}+is}(z)=\varphi_{\frac{1}{2}+is}(ke^{-r}i)=P_{-\frac{1}{2}+is}(\cosh r),
\end{equation*}
where $k\in K$, $r\in\R_{\ge0}$ is the distance from $z$ to $i$, and $P_{s}(r)$ is the Legendre function of the first kind. We also denote $\phi(s,r)=\varphi_{\frac{1}{2}+is}(e^{-r}i)$, and note that
for $s\in\mathbb{R}$ (see~\cite[Lemma 7]{DeCorte2019} or \cite[Exercise 3.2.28]{terras2013harmonic})
\begin{equation*}
\phi(s,r)=\frac{\sqrt{2}}{\pi}r\int_{0}^{1}\frac{\cos\left(srx\right)}{\sqrt{\cosh r-\cosh rx}}dx.
\end{equation*}

The Helgason-Fourier transform of a $K$-invariant function $f$ reads as
\begin{equation*}
\widehat{f}(s)=\int_{\mathbb{H}}f(z)\varphi_{\frac{1}{2}+is}(z)dz=\int_{0}^{\infty}f(e^{-r}i)P_{-\frac{1}{2}+is}(\cosh r)\sinh rdr.
\end{equation*}

For two functions $f_{1},f_{2}\in L^{1}\left(\Hyp\right)$, their convolution is defined as
\begin{equation*}
f_{1}\ast f_{2}(z)=\int_{G}f_{1}(gi)f_{2}(g^{-1}z)dg.
\end{equation*}
We exploit the following properties of the Helgason-Fourier transform on $\Hyp$. For an extensive presentation of the theory, see \cite{helgason1984groups,terras2013harmonic}.
\begin{prop}
\label{prop:Plancherel-Helgason}\cite[Theorem 3.2.3]{terras2013harmonic}
\begin{enumerate}
\item (Plancherel Formula) The map $f\to\widehat{f}$ extends to an isometry between $L^{2}\left(\mathbb{H},d\mu\right)$ and $L^{2}\left(\mathbb{R}\times K,\frac{1}{4\pi}s\tanh\pi s\,dsdk\right)$,
where $K$ is identified with $\R/\Z$.
\item (Convolution property) For $f,g\in L^{1}(\mathbb{H})$, where $g$
is $K$-invariant, 
\begin{equation*}
\widehat{f\ast g}=\widehat{f}\cdot\widehat{g},
\end{equation*}
where $\ast$ stands for convolution, and $\cdot$ for pointwise multiplication.
\end{enumerate}
\end{prop}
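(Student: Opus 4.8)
The plan is to prove Proposition~\ref{prop:Plancherel-Helgason}, whose two parts are the Plancherel formula and the convolution property for the Helgason--Fourier transform on $\Hyp$. Since these are foundational statements in the harmonic analysis of the hyperbolic plane and are cited from \cite{terras2013harmonic}, my strategy would be to derive them from the corresponding facts for the classical (Euclidean) Fourier transform and the spectral theory of the hyperbolic Laplacian, reducing each part to a more standard computation. First I would set up the non-Euclidean plane waves $e_{s,k}(z) = (\mathrm{Im}(kz))^{\frac{1}{2}+is}$, which are the joint eigenfunctions entering the definition of $\widehat{f}$, and record that each is an eigenfunction of the Laplace--Beltrami operator $\Delta$ with eigenvalue $\frac14 + s^2$. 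This identifies $s$ as the natural spectral parameter and explains the appearance of the measure $\frac{1}{4\pi} s\tanh\pi s\,ds\,dk$ on the transform side as the Plancherel (spectral) measure of $\Delta$.

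For the Plancherel formula (part~(1)) I would proceed in two stages. For $K$-invariant functions, the Helgason--Fourier transform reduces to the spherical (Mehler--Fock / Legendre) transform $\widehat{f}(s) = \int_0^\infty f(e^{-r}i)\,\phi(s,r)\sinh r\,dr$ displayed in the excerpt, and the isometry statement becomes the classical Mehler--Fock inversion and Parseval identity for the Legendre functions $P_{-\frac12+is}(\cosh r)$; I would quote or re-derive this one-dimensional transform pair, from which the constant $\frac{1}{4\pi}$ and the density $s\tanh\pi s$ emerge via the known asymptotics and $c$-function of the Legendre functions. To pass from the $K$-invariant case to general $f\in L^2(\Hyp)$, I would decompose $f$ into its $K$-types by expanding in the angular variable $k\in K\cong\R/\Z$, apply the spherical result to each component, and reassemble using orthogonality of characters on $K$; this is exactly the point where the extra integration $dk$ on the transform side enters, and it upgrades the scalar Mehler--Fock isometry to the full $L^2(\R\times K)$ isometry. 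The density of nice functions (say compactly supported smooth $f$) in $L^2(\Hyp)$ then lets me extend the isometry to all of $L^2$ by continuity.

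For the convolution property (part~(2)) I would use the definition $f_1 * f_2(z) = \int_G f_1(gi)\,f_2(g^{-1}z)\,dg$ together with the $K$-invariance of $g=f_2$. The key computation is that the non-Euclidean plane wave $e_{s,k}$ behaves multiplicatively under the convolution precisely because, for a $K$-invariant kernel, the spherical function $\varphi_{\frac12+is}$ is the eigenfunction that diagonalizes convolution: one has $e_{s,k} * g \;=\; \widehat{g}(s)\, e_{s,k}$, i.e.\ the plane waves are eigenvectors of the convolution operator with eigenvalue $\widehat{g}(s)$. Plugging this into the definition of $\widehat{f_1 * f_2}$ and applying Fubini (justified by the $L^1$ hypotheses) gives $\widehat{f_1 * f_2}(s,k) = \widehat{f_1}(s,k)\,\widehat{g}(s)$, which is the asserted identity. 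The main obstacle is part~(1): carefully pinning down the normalization constant and the exact Plancherel density $\frac{1}{4\pi}s\tanh\pi s$ requires the precise asymptotic behaviour of the Legendre functions and the Harish-Chandra $c$-function, a delicate analytic input; the reduction to $K$-types and the convolution identity, by contrast, are formal once the eigenfunction property of the $e_{s,k}$ and the Fubini justifications are in place.
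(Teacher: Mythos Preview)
The paper does not give its own proof of this proposition: it is stated as a quotation from \cite[Theorem~3.2.3]{terras2013harmonic} and used as a black box. Your outline is a reasonable sketch of the standard derivation (reduction to the Mehler--Fock transform via $K$-types for the Plancherel part, and the eigenfunction property of plane waves under radial convolution for the second part), but there is nothing in the paper to compare it against; the author simply cites the result and moves on.
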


The Helgason-Fourier transform can be extended to compactly supported measures on $\Hyp$. Namely, for such a measure $\nu$, its transform $\widehat{\nu}(s,k)\in C\left(\C\times K\right)$, is defined for $s\in\mathbb{C}$ and $k\in K=PSO_{2}(\mathbb{R})$ as 
\begin{equation*}
\widehat{\nu}(s,k)=\int_{\mathbb{H}}\overline{\left(\text{Im}(k(z))\right)^{\frac{1}{2}+is}}d\nu,
\end{equation*}
and, if the measure is $K$-invariant, its transform is independent of $k$, and can be written as
\begin{equation*}
\widehat{\nu}(s)=\int_{\Hyp}\varphi_{\frac{1}{2}+is}(z)d\nu.
\end{equation*}

We will need the following claim, which follows from Proposition~\ref{prop:Plancherel-Helgason}. 
\begin{corollary}
\label{cor:L2 fourier transform}Let $\nu$ be a compactly supported measure on $\Hyp$, and assume that $\widehat{\nu}\in L^{2}\left(\mathbb{R}\times K,\frac{1}{4\pi}s\tanh\pi s\,dtdk\right)$.
Then $\nu$ can be represented as an $L^{2}$-function on $\Hyp$, i.e.,
there exists $f_{\nu}\in L^{2}\left(\Hyp\right)$ such that for every
$f\in C_{c}\left(\Hyp\right)$, $\nu\left(f\right)=\int f_{\nu}(z)f(z)dz$.
\end{corollary}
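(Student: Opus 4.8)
The plan is to invert the Plancherel isometry to produce the candidate density, and then to verify that it represents $\nu$ by passing to the frequency side through convolutions. First I would invoke part (1) of Proposition~\ref{prop:Plancherel-Helgason}: the Helgason-Fourier transform is a (surjective) isometry from $L^2(\Hyp)$ onto $L^2\!\left(\mathbb{R}\times K, \tfrac{1}{4\pi}s\tanh\pi s\,ds\,dk\right)$. Since the hypothesis places $\widehat{\nu}$ in the target space, there is a unique $f_\nu\in L^2(\Hyp)$ with $\widehat{f_\nu}=\widehat{\nu}$. The entire remaining content is to show that this $f_\nu$ represents $\nu$, that is, that $\nu(f)=\int f_\nu f\,d\mu$ for every $f\in C_c(\Hyp)$.

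The key step uses the convolution property, part (2) of Proposition~\ref{prop:Plancherel-Helgason}, together with its extension to compactly supported measures. For a $K$-invariant function $g$ (say smooth and compactly supported), both $\nu\ast g$ and $f_\nu\ast g$ are honest functions, and
\[
\widehat{\nu\ast g}=\widehat{\nu}\cdot\widehat{g}=\widehat{f_\nu}\cdot\widehat{g}=\widehat{f_\nu\ast g}.
\]
Because $\nu$ is compactly supported and $g$ is smooth and compactly supported, $\nu\ast g$ is smooth and compactly supported, hence in $L^2$; likewise $f_\nu\ast g\in L^2$. By the injectivity of the Plancherel isometry, the displayed equality of transforms forces $\nu\ast g=f_\nu\ast g$ almost everywhere.

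I would then take $g=g_\varepsilon$ to be a $K$-invariant approximate identity (radial bumps of shrinking support and unit mass) and test against a fixed $f\in C_c(\Hyp)$. On one hand, standard approximate-identity estimates give $\int f\,(\nu\ast g_\varepsilon)\,d\mu\to\nu(f)$ as $\varepsilon\to 0$, using that $\nu$ is a compactly supported measure. On the other hand, $f_\nu\ast g_\varepsilon\to f_\nu$ in $L^2(\Hyp)$, so that $\int f\,(f_\nu\ast g_\varepsilon)\,d\mu\to\int f_\nu f\,d\mu$ by Cauchy--Schwarz. Combining with the previous step yields $\nu(f)=\int f_\nu f\,d\mu$, which is exactly the asserted representation.

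The main obstacle is the soft analysis bridging the measure and the function: justifying that the convolution property and the very definition of $\widehat{\nu}$ extend from functions to the compactly supported measure $\nu$ (so that $\widehat{\nu\ast g}=\widehat{\nu}\,\widehat{g}$ and the transform of $f_\nu\,d\mu$ genuinely coincides with $\widehat{f_\nu}$), and controlling the two limits as $\varepsilon\to 0$. A cleaner but less self-contained alternative would be to substitute the Fourier inversion formula for $f$ directly into $\nu(f)$ and apply Fubini; this reduces the claim to the implication $\widehat{\nu}\equiv 0\Rightarrow\nu=0$, using that $\overline{(\mathrm{Im}(kz))^{1/2+is}}=(\mathrm{Im}(kz))^{1/2-is}$ for real $s$ to pass between $\widehat{\nu}$ and the conjugate kernel. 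I expect the interchange of integration and the approximate-identity convergence to be the only genuinely technical points, with everything else following directly from Proposition~\ref{prop:Plancherel-Helgason}.
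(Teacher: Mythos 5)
Your proposal is correct and matches the paper's (implicit) approach: the paper gives no written proof at all, stating only that the corollary ``follows from Proposition~\ref{prop:Plancherel-Helgason}'', and your argument --- pulling $f_\nu$ back through the Plancherel isometry and identifying $\nu$ with $f_\nu$ via $K$-invariant approximate identities and the convolution property --- is exactly the natural fleshing-out of that assertion. The technical points you flag (extending $\widehat{\nu\ast g}=\widehat{\nu}\cdot\widehat{g}$ to measures, and that $\widehat{\nu}$ indeed lies in the image of the isometry, which holds since it is defined by the same kernels as the transform of a function) are the right ones and are routine to verify.
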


\subsection{Adjacency on the hyperbolic plane}

For $r>0$, let $\Hyp(r)$ be the graph whose vertex set is the hyperbolic plane $\Hyp$, where two points are joined with an edge precisely when their distance is equal to $r$. For any $f \in L^2(\Hyp)$, $r>0$, and $z \in \Hyp$, we define $(A_r f )(z)$ to be the average of $f$ around the hyperbolic circle of radius $r$ centered at $z$. It is not hard to check that $A_r$ is self-adjoint, and bounded with norm $1$. The operator $A_r$ can be thought of as an adjacency operator for the graph $\Hyp(r)$, in the sense that any measurable independent set of $\Hyp(r)$ is also $A_r$-independent.

We briefly comment here on our choice of the operator $A_r$. Let $B$ be any operator which includes the independent sets of $\Hyp(r)$ in its collection of $B$-independent sets. Let $k \in K$ be any rotation of $\Hyp$ fixing the origin and let $R_k : L^2(\Hyp) \to L^2(\Hyp)$ denote precomposition with $k^{-1}$:
\[
	R_k f := f \circ k^{-1}.
\]
Let $B' = R_k^{-1} B R_k$. Then the independent sets for $\Hyp(r)$ will also be $B'$-independent, and moreover, it is clear that $M(B) = M(B')$ and $m(B) = m(B')$. Therefore, when trying to choose which operator to use in Theorem~\ref{thm:hoffman}, we may as well replace the operator $B$ with the ``symmetrized'' operator 
\[
	S := \int_{k \in K} R_k^{-1} B R_k~dk,
\]
which is rotationally invariant in the sense that $R_k^{-1} S R_k = S_d$ for all $k \in K$. In other words, without loss of generality, we may take the operator $B$ to be rotation-invariant. Our choice of $A_r$ was therefore natural.

\subsection{The numerical range of $A_{r}$ on $\Hyp$}

For $r>0$, let $A_{r}$ denote the operator on $C(\mathbb{H})$ that averages a function over a circle of radius $r$, i.e., for a function $f\in C(\mathbb{H})$ and $z=gi\in\mathbb{H}$ ($g\in G$),
\begin{equation*}
\left(A_{r}f\right)(z)=\intop_{K}f\left(gk\left(\begin{array}{cc}
e^{r/2} & 0\\
0 & e^{-r/2}
\end{array}\right)i\right)dk.
\end{equation*}
The operator $A_{r}$ is bounded and self-adjoint with respect to the $L^{2}$-norm on $L^{2}\left(\Hyp\right)\cap C\left(\Hyp\right)$, so it extends to a self-adjoint operator $A_{r}\colon L^{2}\left(\Hyp\right)\to L^{2}\left(\Hyp\right)$. By duality, we may also extend $A_{r}$ to an operator on the compactly supported measures on $\Hyp$. Note that the operator $A_{r}$ can be written as a convolution from the right with a uniform $K$-invariant probability measure $\delta_{S_{r}}$ supported on the double coset $K\left(\begin{array}{cc}
e^{r/2} & 0\\
0 & e^{-r/2}
\end{array}\right)K$, i.e.,
\begin{equation*}
A_rf=f \ast \delta_{S_{r}}.
\end{equation*}

The spherical functions $\varphi_{\frac{1}{2}+is}$ on $\Hyp$ are eigenfunctions of $A_{r}$ for every $r>0$, namely, 
\[
A_{r}\varphi_{\frac{1}{2}+is} =P_{-\frac{1}{2}+is}(\cosh r)\cdot\varphi_{\frac{1}{2}+is}.
\]
In particular, the following lemma follows from Proposition~\ref{prop:Plancherel-Helgason} and Corollary~\ref{cor:L2 fourier transform}:
\begin{lemma}
The $L^{2}$-spectrum of $A_{r}$ on $\Hyp$ is the set $W_0(A_r) = \left\{ P_{-\frac{1}{2}+is}(\cosh r)\mid s\in\R\right\} $, and hence, 
\[
 M(A_r) = \sup W_0(A_r),\text{ and } m(A_r) = \inf W_0(A_r).
\]
\end{lemma}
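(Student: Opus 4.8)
The plan is to diagonalize $A_r$ by the Helgason-Fourier transform and then read off its $L^2$-spectrum as the essential range of the resulting Fourier multiplier. I would start from the description of $A_r$ as convolution on the right with the uniform $K$-invariant probability measure $\delta_{S_r}$ supported on the circle of radius $r$ about the origin. Because $\delta_{S_r}$ is $K$-invariant and compactly supported, its spherical transform is $\widehat{\delta_{S_r}}(s) = \int_\Hyp \varphi_{1/2+is}(z)\,d\delta_{S_r}(z)$; since $\varphi_{1/2+is}$ depends only on the distance to $i$, which equals $r$ everywhere on $\supp \delta_{S_r}$, this collapses to
\[
\widehat{\delta_{S_r}}(s) = \varphi_{1/2+is}(e^{-r}i) = P_{-1/2+is}(\cosh r) = \phi(s,r).
\]
This is precisely the eigenvalue in the relation $A_r\varphi_{1/2+is} = P_{-1/2+is}(\cosh r)\,\varphi_{1/2+is}$ recorded above.

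Next I would upgrade the convolution property of Proposition~\ref{prop:Plancherel-Helgason} from $L^1$ functions to the measure $\delta_{S_r}$, obtaining $\widehat{A_r f}(s,k) = \phi(s,r)\,\widehat{f}(s,k)$ for all $f\in L^2(\Hyp)$. Concretely, one approximates $\delta_{S_r}$ weakly by $K$-invariant $L^1$ functions $g_n$ with $\widehat{g_n}\to\phi(\cdot,r)$ pointwise, applies $\widehat{f\ast g_n} = \widehat f\cdot\widehat{g_n}$, and passes to the limit using that $A_r$ is bounded on $L^2$ together with the representation results of Corollary~\ref{cor:L2 fourier transform}; the identity then extends to all of $L^2$ by density. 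Under the Plancherel isometry of Proposition~\ref{prop:Plancherel-Helgason} between $L^2(\Hyp)$ and $L^2(\R\times K,\tfrac{1}{4\pi}s\tanh(\pi s)\,ds\,dk)$, this exhibits $A_r$ as unitarily equivalent to multiplication by the symbol $(s,k)\mapsto\phi(s,r)$, which depends on $s$ only.

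I would then invoke the standard spectral description of a multiplication operator: on an $L^2$-space, the spectrum of multiplication by a bounded measurable symbol is its essential range. The multiplier $\phi(\cdot,r)$ is continuous on $\R$ (as seen from the integral formula for $\phi(s,r)$ in the preliminaries), and the Plancherel measure $\tfrac{1}{4\pi}s\tanh(\pi s)\,ds$ gives positive mass to every nonempty open subinterval of $\R$; hence the essential range equals the closure of the range $\{P_{-1/2+is}(\cosh r)\mid s\in\R\} = W_0(A_r)$. This identifies the $L^2$-spectrum of $A_r$ with $W_0(A_r)$. Finally, since $A_r$ is bounded and self-adjoint, the closure of its numerical range is the smallest interval containing its (real) spectrum, so $M(A_r) = \sup W_0(A_r)$ and $m(A_r) = \inf W_0(A_r)$ follow immediately.

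I expect the only genuinely delicate point to be the justification that multiplication by the continuous symbol $\phi(\cdot,r)$ has spectrum equal to the closure of its range and not a proper subset: this rests on the Plancherel measure having full support on $\R$, which guarantees that no symbol value is attained on a null set only. Everything else is a routine transcription of the diagonalization together with the standard multiplier-spectrum theorem.
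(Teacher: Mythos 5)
Your proposal is correct and follows essentially the same route the paper indicates: the paper derives this lemma directly from Proposition~\ref{prop:Plancherel-Helgason} and Corollary~\ref{cor:L2 fourier transform}, i.e., by diagonalizing $A_r$ under the Helgason--Fourier transform as multiplication by the symbol $\phi(s,r)=P_{-\frac{1}{2}+is}(\cosh r)$ and identifying the spectrum with the essential range, exactly as you spell out. The only cosmetic remark is that the spectrum is a priori the \emph{closure} of $W_0(A_r)$; this coincides with $W_0(A_r)$ itself because $\phi(\cdot,r)$ is continuous, tends to $0$ as $|s|\to\infty$, and changes sign (so $0$ lies in the range and the range is closed), a point worth a line in a fully written-out proof.
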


\subsection{Fuchsian groups}\label{subsec:Fuchsian}

A discrete subgroup $\Gamma\subseteq \PSL$ is called a \emph{Fuchsian group}. Every hyperbolic surface admits a Fuchsian model, i.e., can be represented as a quotient of $\Hyp$ by a Fuchsian group. 

For a Fuchsian group $\Gamma \subseteq \PSL$, a closed region $F\subseteq \Hyp$ (that is, a closure of an open non-empty set $F_0\subseteq \Hyp$, called the interior of $F$) is called a \emph{fundamental region} for $\Gamma$ if 
\[
 \bigcup_{\gamma\in\Gamma} \gamma F = \Hyp \text{ and } F_0\cap \gamma F_0 = \emptyset,\, \forall 1\neq \gamma\in \Gamma.
\]
Since two different fundamental regions for $\Gamma$ have the same measure (\cite[Theorem 3.1.1]{katok1992fuchsian}), we refer to it as the measure of the quotient $\mu(\Gamma\backslash\Hyp)$. If it is finite, we call $\Gamma$ finite covolume. Note that in this case, the all-one function belongs to $L^2(\Gamma\backslash \Hyp)$. 

\subsection{The operator $A_r$ and its numerical range on the quotients}

Let $\Gamma\subseteq \PSL$ to be a Fuchsian group. A measurable subset $I\subseteq \Hyp$ is called $\Gamma$-invariant iff for every $1\neq \gamma\in\Gamma$
\[
 \mu\left(I\triangle \gamma I\right) = 0.
\]

We denote by $\mu(\Gamma\backslash I)$ the measure $\mu (I\cap F)$, where $F$ is a fundamental region for $\Gamma$. Since $I$ is $\Gamma$-invariant, it is independent of $F$.

An equivalent way to study $\Gamma$-invariant independent sets on the graph $\Hyp(r)$, is to consider the following graph. Its vertex set is $\Gamma\backslash \Hyp$, and two points $\Gamma z$ and $\Gamma z'$ are connected by an edge iff there exists $\gamma\in\Gamma$ such that $\gamma z$ and $z'$ are at distance $r$ from each other on $\Hyp$. 

In order to apply the machinery of Theorem~\ref{thm:hoffman}, we consider the actions of $A_{r}$ on $L^{2}\left(\Gamma\backslash\mathbb{H}\right)$. In this case the spectrum is not necessarily discrete, but one may still associate to every point of the spectrum a spherical function $\varphi_{\frac{1}{2}+is}$. The value $\frac{1}{2}+is\in\mathbb{C}$ is called a "unitary dual parameter" and the union of all the unitary dual parameters across the spectrum is call the unitary dual of $X=\Gamma\backslash\Hyp$. Namely, if $\frac{1}{2}+is\in\mathbb{C}$ appears in the unitary dual of $\Gamma\backslash\Hyp$, then $P_{-\frac{1}{2}+is}(\cosh r)$ is in the spectrum of $A_{r}$ (and $\frac{1}{4}+s^{2}$ is an eigenvalue of the Laplacian $\Delta$). It is well-known that, in general, the unitary dual of $\Gamma\backslash\Hyp$ is contained in the set $\left\{ \frac{1}{2}+is\mid s\in\R\right\} \cup\left\{ \frac{1}{2}+is\mid is\in\left(-\frac{1}{2},\frac{1}{2}\right)\right\} \cup\left\{ 0,1\right\}$ (see e.g., \cite[Section 5.2]{lubotzky1994discrete}). The set $\left\{ \frac{1}{2}+is\mid s\in\R\right\}$ is called the principal series, the set $\left\{ \frac{1}{2}+is\mid is\in\left(-\frac{1}{2},\frac{1}{2}\right)\right\}$ is called the complementary series, and $\{0,1\}$ is called trivial. The trivial part corresponds to the constant function on $\Gamma\backslash \Hyp$. 

\begin{lemma}\label{lem:num-range}
For a Fuchsian group $\Gamma\subseteq \PSL$, the $L^{2}$-spectrum of $A_{r}$ on $\Gamma \backslash \Hyp$ is contained in the set $W(A_r) = \left\{ P_{-\frac{1}{2}+is}(\cosh r)\mid s\in\R\text{ or } is\in \left[-\frac{1}{2},\frac{1}{2}\right]\right\}$. In particular, the numerical range of $A_r$ on $L^2(\Gamma\backslash \Hyp)$ satisfies
\[
 [m(A_r),M(A_r)] \subseteq [\inf W(A_r), \sup W(A_r)].
\]
\end{lemma}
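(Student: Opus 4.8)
The plan is to diagonalize $A_r$ via the spectral decomposition of $L^2(\Gamma\backslash\Hyp)$ and then transport the eigenvalue formula already established above, together with the quoted localization of the unitary dual, through that decomposition.

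First, I would use that $A_r$ is the right convolution $f\mapsto f\ast\delta_{S_r}$ with the $K$-bi-invariant probability measure $\delta_{S_r}$; consequently $A_r$ commutes with the $G$-action and is a bounded function of the Laplacian $\Delta$ on $\Gamma\backslash\Hyp$. By the spectral theorem it therefore suffices to describe the support of the spectral measure of $\Delta$ and to record the scalar by which $A_r$ acts on each spherical component. The relation $A_r\varphi_{\frac12+is}=P_{-\frac12+is}(\cosh r)\,\varphi_{\frac12+is}$, noted above, says precisely that on the part of the decomposition carrying spherical parameter $\frac12+is$ the operator $A_r$ acts as multiplication by the scalar $P_{-\frac12+is}(\cosh r)$. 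This is uniform over the discrete part (cusp forms and residual spectrum) and the continuous part (Eisenstein series) that together constitute $L^2(\Gamma\backslash\Hyp)$ for a general finite-covolume $\Gamma$; when $\Gamma$ is cocompact only the discrete part occurs.

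Next, I would invoke the classification of the unitary dual quoted above: every spherical parameter $\frac12+is$ appearing in $L^2(\Gamma\backslash\Hyp)$ belongs to the principal series ($s\in\R$), the complementary series ($is\in(-\frac12,\frac12)$), or the trivial representation (the endpoints $is=\pm\frac12$, both giving the eigenvalue $P_0(\cosh r)=1$). Each associated scalar $P_{-\frac12+is}(\cosh r)$ therefore lies in $W(A_r)=\{P_{-\frac12+is}(\cosh r)\mid s\in\R\text{ or }is\in[-\frac12,\frac12]\}$, and so the $L^2$-spectrum of $A_r$ on $\Gamma\backslash\Hyp$ is contained in $W(A_r)$.

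Finally, since $A_r$ is bounded and self-adjoint, the endpoints of its numerical range coincide with the extremes of its spectrum: $m(A_r)=\inf\mathrm{spec}(A_r)$ and $M(A_r)=\sup\mathrm{spec}(A_r)$. Combining this with the spectral containment yields $[m(A_r),M(A_r)]\subseteq[\inf W(A_r),\sup W(A_r)]$, as claimed. The one genuine subtlety, and the step I would be most careful about, is the continuous spectrum present when $\Gamma$ is not cocompact: there the spherical functions $\varphi_{\frac12+is}$ are not themselves in $L^2(\Gamma\backslash\Hyp)$, so the diagonalization must be phrased through the direct-integral/spectral-measure formalism rather than through honest eigenvectors. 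Because both the eigenvalue formula and the confinement of the unitary dual are available as cited inputs, the argument reduces to assembling standard spectral theory rather than proving anything new.
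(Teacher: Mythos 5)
Your proposal is correct and follows essentially the same route as the paper, which states the lemma without a separate proof and derives it from exactly the ingredients you assemble: the eigenvalue relation $A_r\varphi_{\frac{1}{2}+is}=P_{-\frac{1}{2}+is}(\cosh r)\,\varphi_{\frac{1}{2}+is}$, the standard confinement of the unitary dual of $\Gamma\backslash\Hyp$ to the principal series, the complementary series, and the trivial parameters (cited to \cite[Section 5.2]{lubotzky1994discrete}), and the fact that for a bounded self-adjoint operator the numerical range lies in the convex hull of the spectrum. Your explicit handling of the continuous (Eisenstein) part through the direct-integral formalism, where the spherical functions are not honest $L^2$-eigenvectors, is precisely the point the paper glosses over, so your write-up is a faithful rigorization of the paper's sketch rather than a different argument.
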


\section{Proofs of Theorems~\ref{thm:main} and~\ref{thm:ind-set}}\label{sec:proof}

The proof of Theorem~\ref{thm:main} goes in two steps. We first prove Theorem~\ref{thm:ind-set} on the size of a periodic distance $r$ avoiding set. Then we show that if a function satisfies the condition of Theorem~\ref{thm:main}, then its support satisfies the condition of Theorem~\ref{thm:ind-set}.

\begin{proof}[Proof of Theorem~\ref{thm:ind-set}]

Operator $A_r$ acts on $L^2(\Gamma\backslash\Hyp)$ as the averaging operator over a sphere of radius $r$ around a point, hence its operator norm is bounded by $1$. Since $\Gamma$ is of finite covolume, the all-one on $\Gamma\backslash\Hyp$ function belongs to $L^2(\Gamma\backslash\Hyp)$, and it is an eigenfunction of $A_r$ with eigenvalue $1$. This implies that $M(r)=1$. Note that the all-one function does not belong to $L^2(\Hyp)$, and this makes the analysis of periodic and non-periodic colorings of $\Hyp$ (as in~\cite{DeCorte2019}) essentially different.

In order to show a lower bound on $m(r)$, we show a lower bound on the whole set 
\[
W(A_r) = \left\{ P_{-\frac{1}{2}+is}(\cosh r)\mid s\in\R\text{ or } is\in \left[-\frac{1}{2},\frac{1}{2}\right]\right\}.
\]
The following expression holds for $P_{-\frac{1}{2} + i\cdot s}(\cosh{r})$ (see \cite[Lemma 3.2]{DeCorte2019}).
$$
P_{-\frac{1}{2} + i\cdot s}(\cosh{r}) = \frac{1}{\sqrt{2}\cdot \pi} \int_{-r}^{r}  \frac{e^{i\cdot s\cdot x}}{\sqrt{\cosh(r)-\cosh(x)}} dx.
$$

Note that for $is\in [-\frac{1}{2},\frac{1}{2}]$, the expression under the integral is positive, and hence $P_{-\frac{1}{2} + i\cdot s}(\cosh{r})$ is also positive. For $s\in \mathbb{R}$, the following bound was shown in \cite[Proposition 2.3]{Golubev2019} that
$$
\left|P_{-\frac{1}{2} + i\cdot s}(\cosh{r})\right|\leq (r+1)e^{-\frac{r}{2}}.
$$

To summarize, for a finite covolume Fuchsian group $\Gamma \subseteq \PSL$, we have
\[
 M(A_r) = 1,\text{ and } m(A_r) \geq - (r+1) e^{-\frac{r}{2}}.
\]

In order to apply Theorem~\ref{thm:hoffman} it is left to note that the all-one function $1_{\Gamma\backslash\Hyp}$ is an eigenfunction of $A_r$ on $\Gamma\backslash\Hyp$, and hence we can take $R = M(A_r) = 1$ and $\e = 0$, and the bound reads as
\[
 \frac{\mu(\Gamma\backslash I)}{\mu(\Gamma\backslash\Hyp)}  \leq \frac{(r+1)e^{-\frac{r}{2}}}{1+(r+1)e^{-\frac{r}{2}}} \leq  (r+1)e^{-\frac{r}{2}} .
\]

\end{proof}

\begin{proof}[Proof of Theorem~\ref{thm:main}.]
In order to prove the theorem it is enough to show that $\supp{f}$ is an $A_r$-independent up to a subset of measure zero. In order to do so, we show that up to a subset of measure zero $\supp{f}$ contains no two points such that there exists a geodesic of length $r$ in $X=\Gamma\backslash \Hyp$ between them. 

It follows from the finiteness of the measure on $X$, via H\"{o}lder's inequality, that $L^2(X)\subset L^1(X)$. Hence, $f$ is an integrable function as it is also non-negative. An analogue of the Lebesgue Differentiation Theorem holds for $X$ (see~\cite[Theorem 2.9.8]{federer1969grundlehren}), and it implies that almost all points of $\supp{f}$ are density points, that is, for almost all $x\in\supp{f}$, the following holds
\[
 \lim_{\delta\to0^+}\frac{\mu\left(\supp{f}\cap B(x,\delta)\right)}{\mu\left(B(x,\delta)\right)} = 1,
\]
where $B(x,\delta)$ is the open ball of radius $\delta$ around $x$. Now assume that there exists two density points $x_1,x_2\in X$ such that there exists a geodesic of length $r$ in $X$ between them. There exists $\delta > 0$ such that 
\begin{equation}\label{eq:inter}
 \frac{\mu\left(\supp{f}\cap B(x_i,\delta)\right)}{\mu\left(B(x_i,\delta)\right)} \geq \frac{2}{3},\text{ for }i=1,2.
\end{equation}
Denote $B_i = \supp{f}\cap B(x_i,\delta)$ for $i=1,2$. Then since there exists a geodesic of length $r$ between $x_1$ and $x_2$, 
\[
 \langle A_r \chi_{B_1}, \chi_{B_2} \rangle > 0,
\]
where $\chi_{B_i}$ is the characteristic function of $B_i$ for $i=1,2$. This follows from Inequality~\ref{eq:inter} and the fact that the circle of radius $r$ centered at an inner point $y\in B(x_2,\delta)$ intersects $B(x_1,\delta)$ by an arc of positive length, see Figure~\ref{fig:inter}.

\begin{figure}[h!]\centering
  \begin{tikzpicture}[scale=1]
\node[label={[shift={(0,-0.1)}]$x_1$}, circle,draw=black, fill=black, inner sep=0pt,minimum size=3] (x1) at (0,0) {};
\draw  (x1) -- node[left] {$\delta$} ++ (-80:0.7);

\node[label={[shift={(0,-0.1)}]$x_2$}, circle,draw=black, fill=black, inner sep=0pt,minimum size=3] (x2) at (4,0) {};
\draw  (x2) -- node[right] {$\delta$} ++ (-80:0.7);

\draw  (x1) -- node[above] {$r$} ++ (x2);

\node[circle,draw=black, inner sep=0pt,minimum size=40] () at (0,0) {};

\node[circle,draw=black, inner sep=0pt,minimum size=40] () at (4,0) {};

\node[label={[shift={(0.2,-0.1)}]$y$}, circle,draw=black, fill=black, inner sep=0pt,minimum size=3] (y) at (4+0.5,0.3) {};
\draw [black,thick,domain=-30:30] plot ({4+0.5-4*cos(\x)}, {0.3-4*sin(\x)});
\draw  (y) -- node[right] {$r$} ++ (160:4);
\end{tikzpicture}
\caption{The circle of radius $r$ centered at $y\in B(x_2,\delta)$ intersects $B(x_1,\delta)$ by an arc of positive length.}\label{fig:inter}
\end{figure}
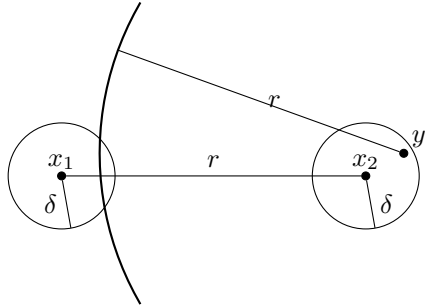
Since $f$ is positive on both $B_1$ and $B_2$ we get that 
\[
\langle A_r f, f \rangle = \int_{X} (A_r f)\cdot f d\mu \geq \int_{B_1} (A_r f)\cdot f d\mu > 0.
\]
\end{proof}

\section{Relation to Known Results}\label{sec:nevo}
The relation between the geometry of a hyperbolic surface and the spectra of various operators on it has been extensively studied for more than thirty years. The averaging operators $A_r$ are not exceptions. One interesting example is the following result on the equidistribution of the circle of radius $r$ in $\Hyp$ on $\Gamma\backslash \Hyp$, which  was originally proved by Nevo,~\cite{nevo1994pointwise}, has become a classical result in ergodic theory, see e.g.~\cite[Chapter 7]{anantharaman2010theoremes}. 
\begin{theorem}[Nevo, \cite{nevo1994pointwise}]\label{thm:nevo}
Let $f\in L^2(\Gamma\backslash \Hyp)$, then 
 \[
 A_r f \to \frac{1}{\mu(\Gamma\backslash \Hyp)}\int_{\Gamma\backslash \Hyp} f d\mu, \text{ as } r\to\infty,
 \]
 almost everywhere and in the $L^2$ norm.
\end{theorem}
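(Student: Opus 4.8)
The plan is to separate the two modes of convergence and to reduce to mean-zero functions first. Write $L^2(\Gamma\backslash\Hyp) = \C\cdot 1_X \oplus L^2_0$, where $L^2_0$ is the orthogonal complement of the constants. Since $A_r 1_X = 1_X$, and since the orthogonal projection of $f$ onto the constants equals $\frac{1}{\mu(\Gamma\backslash\Hyp)}\int_{\Gamma\backslash\Hyp} f\,d\mu\cdot 1_X$, it suffices to prove that for $f\in L^2_0$ one has $A_r f\to 0$ both in $L^2$ and almost everywhere as $r\to\infty$. Adding back the constant component then gives the stated limit.

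For the $L^2$ statement I would invoke the spectral decomposition of $L^2_0(\Gamma\backslash\Hyp)$ attached to the Laplacian (equivalently the Plancherel theory of the Helgason-Fourier transform). Each spectral component with unitary dual parameter $\tfrac12+is$ is an eigenfunction of $A_r$ with eigenvalue $P_{-\frac12+is}(\cosh r)$, so that
\[
\|A_r f\|_2^2 = \int \bigl|P_{-\frac12+is}(\cosh r)\bigr|^2\, d\mu_f(s),
\]
where $\mu_f$ is the spectral measure of $f$, of finite total mass $\|f\|_2^2$. By Lemma~\ref{lem:num-range} the support of $\mu_f$ lies in the principal series $s\in\R$ together with finitely many complementary-series points $is=\sigma\in(0,\tfrac12)$, the trivial point $is=\tfrac12$ being absent on $L^2_0$. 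On the principal series the bound $|P_{-\frac12+is}(\cosh r)|\le (r+1)e^{-r/2}$ used in the proof of Theorem~\ref{thm:ind-set} forces decay to $0$, while on each complementary-series point the Harish-Chandra asymptotics give $P_{-\frac12+is}(\cosh r)=O\bigl(e^{-(\frac12-\sigma)r}\bigr)\to 0$. The integrand is bounded by $1$ and tends to $0$ pointwise in $s$, so dominated convergence against the finite measure $\mu_f$ yields $\|A_r f\|_2\to 0$.

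The genuinely hard part is the almost-everywhere convergence, and this is where the main obstacle lies. The standard route is a maximal inequality: it suffices to bound $\bigl\|\sup_{r\ge 1}|A_r f|\bigr\|_2\le C\|f\|_2$, for then a.e.\ convergence on all of $L^2_0$ follows from the Banach principle together with a.e.\ convergence on a dense subclass (e.g.\ spectrally band-limited functions, for which it is immediate from the $L^2$ estimate and continuity in $r$). To attack the maximal bound I would fix $r_0=1$ and use the Sobolev-type identity in the radial variable,
\[
\sup_{r\ge 1}|A_r f(x)|^2 \le |A_1 f(x)|^2 + 2\int_1^\infty |A_r f(x)|\,|\partial_r A_r f(x)|\,dr,
\]
then integrate in $x$ and apply Fubini and Cauchy-Schwarz, reducing everything to the two operator estimates $\|A_r f\|_2$ and $\|\partial_r A_r f\|_2$; the former decays like $(1+r)e^{-r/2}$ as above.

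The obstruction is the derivative term $\|\partial_r A_r f\|_2$: differentiating the multiplier in $r$ costs a factor growing linearly in the spectral parameter, since $P_{-\frac12+is}(\cosh r)$ oscillates at frequency $\sim|s|$, so the naive bound is only $|\partial_r P_{-\frac12+is}(\cosh r)|\lesssim (1+|s|)(1+r)e^{-r/2}$, with the factor $(1+|s|)$ unbounded on the principal series. This loss of a derivative is exactly what distinguishes circle averages from ball averages and cannot be absorbed by the $r$-decay alone. The resolution, following Nevo, is to forgo the crude endpoint bound and instead exploit the Kunze-Stein phenomenon, i.e.\ the strong $L^p$ decay of matrix coefficients on $\PSL$: a dyadic decomposition in $r$ combined with the representation-theoretic maximal inequality for singular spherical averages controls the high-frequency contribution at the cost of passing to $L^p$ for some $p>1$ and interpolating. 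I would therefore expect to establish the maximal inequality in $L^p$ for $p$ close to $2$, which still contains a dense class and still upgrades the mean convergence to a.e.\ convergence via the Banach principle. Treating the local (small-$r$) and global (large-$r$) ranges of the maximal function separately, and verifying the spherical-function derivative estimates uniformly in $s$, are the routine-but-lengthy steps I would carry out last.
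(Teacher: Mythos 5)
You should first note a mismatch of expectations: the paper does not prove Theorem~\ref{thm:nevo} at all --- it is quoted verbatim from Nevo's work \cite{nevo1994pointwise} and used only for context in Section~\ref{sec:nevo} --- so there is no internal proof to compare against, and your proposal must be judged against Nevo's actual argument. The first half of your proposal is sound: the splitting $L^2 = \C\cdot 1_X \oplus L^2_0$, the identification of the eigenvalue of $A_r$ on the spectral component with parameter $\frac12+is$ as $P_{-\frac12+is}(\cosh r)$, the bound $(r+1)e^{-r/2}$ on the principal series, the decay $O\bigl(e^{-(\frac12-\sigma)r}\bigr)$ at the finitely many complementary-series points (finitely many because a finite-area surface has only finitely many Laplace eigenvalues below $\frac14$), and dominated convergence against the finite spectral measure together give $\|A_r f\|_2 \to 0$ on $L^2_0$; phrasing this via the spectral measure also correctly absorbs the continuous (Eisenstein) spectrum present when $\Gamma\backslash\Hyp$ is non-compact.

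The almost-everywhere half, however, contains a genuine gap: the maximal inequality, which is the entire content of Nevo's theorem, is named but not proved. Your own accounting shows why it cannot be waved through --- on a dyadic spectral block $|s|\sim 2^j$ the Sobolev trick yields only $\bigl\|\sup_{r\ge1}|A_rf_j|\bigr\|_2 \lesssim 2^{j/2}\|f_j\|_2$, a loss of half a derivative that does not sum over $j$ --- and the proposed rescue has two concrete defects. First, the Banach principle upgrades a.e.\ convergence from a dense subclass to a whole space only if the maximal operator is (weak-type) bounded on a space \emph{containing} $L^2$; since $\mu(\Gamma\backslash\Hyp)<\infty$, you would need the maximal bound in $L^p$ with $p\le 2$ (so that $L^2\subseteq L^p$), whereas an $L^p$ bound for $p>2$, ``close to $2$'', gives a.e.\ convergence only on the proper subspace $L^p\subsetneq L^2$; your sketch leaves which side of $2$ you intend ambiguous, and the two cases are not interchangeable. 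Second, the Kunze--Stein phenomenon is a statement about convolution of \emph{tempered} matrix coefficients on the group $G=\PSL$ itself; in $L^2(\Gamma\backslash\Hyp)$ the representation generically contains non-tempered complementary series, so matrix-coefficient decay degrades and the group-level Kunze--Stein argument does not transfer directly to the quotient. Nevo's proof instead runs through spectral estimates valid in an arbitrary unitary representation (square functions and analytic interpolation of operator families), with the complementary-series parameters handled separately via the explicit decay $e^{-(\frac12-\sigma)r}$ --- which is exactly the portion you defer as ``routine-but-lengthy''. It is not routine; as written, the hard half of the theorem is assumed rather than established.
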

The speed of converges is governed by the largest in absolute value non-trivial eigenvalue of $A_r$, which can be expressed via the spectral gap of the Laplacian on $\Gamma\backslash \Hyp$. Similarly, the mixing rate of the geodesic and horocycle flows is bounded in the terms of the spectral gap in the work of Ratner,~\cite{ratner1987rate}. In both cases, bounds get worse as the spectral gap gets smaller. In particular, the best bounds are achieved when there is no non-trivial spectrum arising from the complementary series. 

One can deduce results similar in nature to the main results of this paper, Theorems~\ref{thm:main} and~\ref{thm:ind-set}, from Theorem~\ref{thm:nevo} in the same way the Expander Mixing Lemma,~\cite[Lemma 2.5]{hoory2006expander}, provides a bound on the size of an independent set in a finite graph (see e.g. the proof of Theorem~6.1 in \cite{evra2015mixing} exploiting this idea in a more general setting). More precisely, let $r > 0$ be fixed, let $X = \Gamma\backslash \Hyp$ be a fintie volume hyperbolic surface and let $I\subset X$ be a distance $r$ avoiding set on it. Then one can deduce that
\[
 \frac{\mu(I)}{\mu(X)} \leq \frac{\beta}{1+\beta},
\]
where $\beta$ is the operator norm of $A_r$ acting on $L_0^2(X) = \left\{g\in L^2(X)\mid \langle g,\chi_X\rangle = 0 \right\}$. Let $\lambda > 0$ be the smallest non-trivial eigenvalue of the Laplacian on $X$. If there are no non-trivial spectrum of $A_r$ arising from the complementary series, then $\lambda$ is at least $\frac{1}{4}$, and $\beta$ can be expresed as (see~\cite{ratner1987rate})
\[
 \beta = \min\left\{\frac{r}{2}e^{-\frac{r}{2}},\left(1+|1+4\lambda|^{-\frac{1}{2}}\right)e^{-\frac{r}{2}}\right\}.
\]
In particular, it can outperform the bound in Theorem~\ref{thm:main}, if  $\lambda$ is large.
If $\lambda < \frac{1}{4}$, then 
\[
 \beta = \min\left\{\frac{r}{2}e^{-C\frac{r}{2}},\left(|1+4\lambda|^{-\frac{1}{2}}\right)e^{-C\frac{r}{2}}\right\}
\]
where $0\leq C <1$, and hence, the above bound performs worse than Theorem~\ref{thm:main}.

\section*{Acknowledgements}
The author would like to thank Fernando Oliveira and Alexander Gorodnik for fruitful discussions and suggestions. The author is grateful to Frank Vallentin for valuable comments and remarks on the first version of the manuscript. 
\subsection*{Funding}
The author is supported by the SNF grant number 20002\_169106.

\bibliographystyle{plain}
\bibliography{mybib}

\end{document}